\def\ocirc#1{\ifmmode\setbox0=\hbox{$#1$}\dimen0=\ht0
    \advance\dimen0 by1pt\rlap{\hbox to\wd0{\hss\raise\dimen0
    \hbox{\hskip.2em$\scriptscriptstyle\circ$}\hss}}#1\else
    {\accent"17 #1}\fi}
 \newtheorem{thm}{Theorem}
\newtheorem{defn}[thm]{Definition}
\newtheorem{lem}[thm]{Lemma}
\newtheorem{cor}[thm]{Corollary}
\newtheorem{rem}[thm]{Remark}
\newtheorem{prop}[thm]{Proposition}
\DeclareMathOperator{\diam}{diam}
\DeclareMathOperator{\dist}{dist}
\DeclareMathOperator{\Per}{Per}
\DeclareMathOperator{\Tran}{Tran}
\DeclareMathOperator{\Rec}{Rec}
 \newcommand{\D}{\mathscr{D}}
 \newcommand{\orbp}{\text{Orb}^+}
 \newcommand{\orb}{\text{Orb}}
 \newcommand{\eps}{\varepsilon}
 \newcommand{\ra}{\rightarrow}
 \newcommand{\A}{\mathscr{A}}
 \newcommand{\Bl}{\mathcal{B}}
 \newcommand{\N}{\mathbb{N}}
 \newcommand{\Z}{\mathbb{Z}}
 \newcommand{\R}{\mathbb{R}}
 \newcommand{\abs}[1]{\left\vert#1\right\vert}
 \newcommand{\set}[1]{\left\{#1\right\}}
 \newcommand{\s}{\sigma}
\newcommand{\z}{\overline{z}}
\def\Arg{\operatorname{Arg}}
\def\cl{\operatorname{cl}}
\def\re{\mathfrak{R}}
\def\im{\mathfrak{I}}
\def\d{\Delta}
\newcommand{\picmm}[4]
{
\begin{figure}[!htbp]
\begin{center}
\includegraphics[width=#2]{#1}
\begin{minipage}[c]{0.8\textwidth}
\begin{center}
\caption{#3}\label{#4}
\end{center}
\end{minipage}
\end{center}
\end{figure}
}
\begin{document}


\title{Factor maps and invariant distributional chaos}

\author[M. Fory\'s]{Magdalena Fory\'s}
\address[M. Fory\'s]{Institute of Computer Science, Faculty of Mathematics and Computer Science, Jagiellonian University,
ul. \L ojasiewicza 6, 30-348 Krak\'ow, Poland}\email{magdalena.forys@uj.edu.pl}

\author[P. Oprocha]{Piotr Oprocha}
\address[P. Oprocha]{AGH University of Science and Technology\\
Faculty of Applied Mathematics\\
al. A. Mickiewicza 30, 30-059 Krak\'ow,
Poland\\ -- and --\\Institute of Mathematics\\ Polish Academy of Sciences\\ ul. \'Sniadeckich 8, 00-956 Warszawa, Poland} \email{oprocha@agh.edu.pl}

\author[P. Wilczy\'nski]{Pawe\l{} Wilczy\'nski}
\address[P. Wilczy\'nski]{Department of Applied Mathematics, Faculty of Mathematics and Computer Science, Jagiellonian University,
ul. \L ojasiewicza 6, 30-348 Krak\'ow, Poland}\email{pawel.wilczynski@uj.edu.pl}

\begin{abstract}
The main aim of this article is to show that maps with specification property have invariant distributionally scrambled sets and that this
kind of scrambled set can be transferred from factor to extension under finite-to-one factor maps. This solves some open questions in the literature
of the topic.

We also show how our method can be applied in practice,
taking as example Poincar\'{e} map of time-periodic nonautonomous planar differential equation
$$
\dot{z}=v(t,z)=\left(1+e^{i\kappa t}\abs{z}^2\right)\z^3 -N
$$
where $\kappa$ and $N$ are sufficiently small positive real numbers.
\end{abstract}

\keywords{}

\subjclass[2010]{Primary 37B05 Secondary 37B10,37C25,37F20}

\maketitle
\section{Introduction}

The study of chaos in terms of dynamics of pairs dates back to 1975 to the famous paper by Li and Yorke \cite{LiYorke}.
During more than last 30 years, these ideas were widely extended. One of very natural extensions is distributional chaos,
introduced by Schweizer and Sm\'{\i}tal in \cite{SchwSm} as a condition equivalent to positive topological entropy
for interval maps. Presently it is known that there is no relation between existence of DC1 pairs (the strongest version of distributional chaos)
and positive topological entropy in general, which makes proving distributional chaos a more challenging task.

In \cite{OprochaDS} one of the authors of the present paper investigated properties of distributionally scrambled sets which are at the same
time invariant for the special case of interval maps. One of the natural questions which immediately arise is about conditions
sufficient for such a sets to appear in the dynamics. In fact, it was conjectured in \cite{OprochaDS} that specification property, together with some
other conditions can be a good candidate. In this paper, we prove in Theorem~\ref{thm:1}
that indeed it is the case. Except specification property itself, we need two additional assumptions, that is existence of a fixed point and a distal pair.
It is worth emphasizing that these two additional assumptions are the minimal conditions that have to be satisfied in our case. Simply, it is not hard to verify
that if $x$ and $f(x)$ are proximal (which is, in particular, the case here) then there is a fixed point in the system,
and it was proved in \cite{OprTAMS} that if a system does not have distal pair, then it cannot have DC1 pair as well.

Our second main result is Theorem~\ref{thm:myc-distr} which shows that invariant distributionally scrambled sets
can be transfered via a semi-conjugacy if this semi-conjugacy covers a periodic point (contained in our scrambled set)
finite-to-one. While this does not completely answers the question if distributional chaos transfers via finite-to-one semi-conjugacy from
distributionally chaotic system, it extends results obtained so far, e.g. in \cite{OpWil6}.
It is also nice completion to recently published results on invariant scrambled sets (e.g. see \cite{BlHuSn}).

As an application of obtained theorems
we prove in Section~\ref{sec:appl} that a Poincar\'{e} map induced by
time-periodic local process generated by the nonautonomous ordinary differential equation
$$
\dot{z}=v(t,z)=\left(1+e^{i\kappa t}\abs{z}^2\right)\z^3 -N
$$
is chaotic, where $N$ and $\kappa$ are small positive real numbers.
When $N$ goes from zero to a positive number
then stationary trajectory
is continued to three periodic trajectories (fixed point
of the Poincar\'{e} map explodes becoming at least 3 fixed points). This shows
real application of our Theorem~\ref{thm:1}, since previous methods on semi-conjugacy
and distributional chaos, e.g. these contained in \cite{OpWil5} or \cite{OpWil6} cannot be applied.
While we do not answer completely open question from \cite{OpWil6} mentioned before (see Remark~6 in \cite{OpWil6})
it provides a way around this problem, which works well with the method of isolating segments.
The authors strongly believe that this approach to proving distributional chaos can be successfully applied in many other situations
arising in applications.

\section{Preliminaries}

\subsection{Basic notation}
If $X$ is a topological space and $W$ is its subset, then by $\cl W$ or $\overline{W}$ we denote the closure of $W$ in $X$.

The diagonal in the product $X\times X$ is denoted $\Delta:=\set{(x,x)\; : \; x\in X}$ and $\Delta_\eps:=B(\Delta,\eps)$ for any $\eps>0$.
By a \emph{perfect set} we mean a nonempty compact set without isolated points and by a \emph{Cantor set} we mean a perfect and totally disconnected set (i.e. any connected subset of this set cannot have more than one point).
When a set $M$ can be presented as at most countable sum of Cantor sets, then we say that $M$ is a \emph{Mycielski set}.
If a set $A$ contains a countable intersection of open and dense subsets of $X$, then we say that $A$ is \emph{residual} in $X$.

The following fact is a simplified version of Mycielski theorem (see \cite{Akin}). It shows strong connections between Mycielski sets and residual relations.

\begin{thm}\label{thm:RelMycielski}
If $Z$ is perfect set in a compact metric space $(X,d)$ and if $R\subset Z\times Z$ is residual in $Z\times Z$ then there is a Mycielski set $M\subset Z$ such that $M$ is dense in $Z$ and $M\times M \subset R\cup \Delta$.
\end{thm}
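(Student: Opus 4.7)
The plan is to perform a Cantor-scheme construction inside $Z$ in the style of Mycielski. Since $R$ is residual in the compact metric space $Z\times Z$, I fix a sequence $\{U_n\}_{n\ge 1}$ of open dense subsets of $Z\times Z$ with $\bigcap_n U_n\subset R$. I also fix a countable dense sequence $\{z_k\}_{k\ge 1}$ in $Z$, available since $Z$ is separable as a subspace of the compact metric space $X$.

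I would construct inductively, for each finite binary word $s\in 2^{<\omega}$, a nonempty relatively open subset $V_s\subset Z$ such that: (a) the closures $\overline{V_{s0}}$ and $\overline{V_{s1}}$ are disjoint and contained in $V_s$; (b) $\diam V_s<2^{-|s|}$; (c) whenever $s\ne t$ have common length $n$, one has $\overline{V_s}\times\overline{V_t}\subset U_n$; and (d) by level $n$ some $V_s$ with $|s|=n$ is contained in $B(z_k,1/k)$ for every $k\le n$. The inductive step is the heart of the argument: perfectness of $Z$ lets me split any nonempty relatively open set into two nonempty relatively open sets with disjoint closures and arbitrarily small diameter, while density of each $U_n$ permits shrinking the finitely many new children further so that the finitely many new product conditions (c) are all realized; condition (d) is arranged by devoting, at each level, one specific child to chase the next point of the dense sequence. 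The bookkeeping here is the only subtle part, but it is handled by processing the finitely many conditions at each level one at a time, each time replacing a nonempty open set by a smaller nonempty open subset provided by the density of the corresponding $U_n$ and the absence of isolated points.

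Once the scheme is complete, I define $M:=\bigcap_{n\ge 1}\bigcup_{|s|=n}\overline{V_s}$. Then $M$ is compact, perfect by the two-way branching in (a), and totally disconnected by the disjointness of sibling closures; therefore $M$ is a Cantor set, and in particular a Mycielski set. Condition (d) makes $M$ dense in $Z$. For distinct points $x,y\in M$, their defining branches diverge at some level $n_0$, so for every $n\ge n_0$ there are distinct $s,t$ of length $n$ with $(x,y)\in\overline{V_s}\times\overline{V_t}\subset U_n$; hence $(x,y)\in\bigcap_{n\ge n_0}U_n\subset R$. Combined with the trivial inclusion of the diagonal, this yields $M\times M\subset R\cup\Delta$.

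The main obstacle is thus exclusively the inductive step: one must simultaneously satisfy the splitting condition (a), the metric shrinking (b), the finitely many product-inclusion conditions (c) across all pairs of distinct same-length words, and the density condition (d), while preserving nonemptiness of every $V_s$. Conceptually this is routine because each requirement amounts to replacing a nonempty open set by a smaller nonempty open subset, but organizing the shrinkings so that none undoes another is the point where care is required.
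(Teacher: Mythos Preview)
The paper does not give its own proof of this statement; it is quoted as a simplified form of Mycielski's theorem with a reference to Akin. So there is nothing in the paper to compare against, and I assess your argument on its own.

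There is a genuine structural gap in the density step: conditions (a) and (d) are incompatible. By (a) every $V_s$ sits inside its parent, so $\bigcup_{|s|=n}\overline{V_s}\subset\bigcup_{|s|=n-1}\overline{V_s}$ and in particular $M\subset\overline{V_\emptyset}$. Your recipe for (d), ``devote one child at each level to chase the next point $z_n$'', cannot succeed, because that child is forced to lie inside some level-$(n-1)$ set, and nothing ensures that any of those $2^{n-1}$ sets meets $B(z_n,1/n)$. More bluntly: a binary Cantor scheme with nested children always produces a single Cantor set, and a Cantor set---being closed and totally disconnected---can never be dense in, say, $Z=[0,1]$. You yourself conclude both that $M$ is a Cantor set and that $M$ is dense in $Z$, which is already a contradiction for such $Z$. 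The fix, and the reason the statement speaks of a \emph{Mycielski} set rather than a Cantor set, is to build countably many Cantor sets, one inside each member of a countable base of $Z$, while enforcing the product condition (c) across all pairs of scheme-sets, including pairs coming from different trees.

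There is also a small slip at the end: from $(x,y)\in\bigcap_{n\ge n_0}U_n$ you cannot deduce $(x,y)\in R$, since only $\bigcap_{n\ge 1}U_n\subset R$ is given. Replacing each $U_n$ by $U_1\cap\cdots\cap U_n$ at the outset removes this.
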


\subsection{Topological dynamics}
In this paper \emph{dynamical system} $(X,f)$ always means a compact metric space (endowed with a metric $d$) together with a continuous map $f\colon X\ra X$.
Open balls are denoted by $B(x,\eps):=\set{y\in X\; : \; d(x,y)<\eps}$.

By $\orbp(x)$ we denote the set $\orbp(x):=\set{x,f(x),f^2(x),\dots}$ and
call it the \emph{(positive) orbit} of a point $x$.
If additionally $f$ is a homeomorphism, we may define its \emph{negative orbit} and \emph{(full)
orbit} by, respectively
$$
\orb^{-} (x,f)=\set{x, f^{-1}(x), f^{-2} (x),\dots},
\quad \quad \orb (x,f)=\orb^{-} (x,f)\cup \orbp (x,f).
$$
A point $y\in X$ is an \emph{$\omega$-limit
point} of a point $x$ if it is an accumulation point of the sequence
$x,f(x),f^2(x),\dots$. The set of all $\omega$-limit points of $x$
is said to be the \emph{$\omega$-limit set} of $x$ (or \emph{positive limit set} of $x$) and is denoted
$\omega(x,f)$. If $f$ is a homeomorphism, then we can define \emph{negative limit set of $x$} $\alpha(x,f)$, putting
$\alpha(x,f)=\omega(x,f^{-1})$.

We say that a point $x$ is \emph{periodic}
if $f^n(x)=x$ for some $n\geq 1$ and \emph{recurrent}
if $x\in \omega(x,f)$.  The set of all periodic points for $f$ is denoted by
$\Per(f)$ and by $\Rec(f)$ we denote the set of all recurrent points of $f$. By \emph{transitive point} we mean any point from the set $\Tran(f)=\set{x : \overline{\orbp(x,f)}=X}$.
A set $S$ is invariant for $f$ if $f(S)\subset S$ and \emph{strongly invariant} if $f(S)= S$.

The specification property was introduced by Bowen in
\cite{Bow3} (see also \cite{DenGrillSig}).
We say that $f$ has the \emph{periodic specification property} 
if, for any $\eps > 0$, there is an integer $N_\eps>0$
such that for any integer $s\geq 2$, any set $\set{y_1,\dots,y_s}$
of $s$ points of $X$, and any sequence $0=j_1\leq k_1 < j_2 \leq k_2
< \dots < j_s \leq k_s$ of $2s$ integers with $j_{l+1} - k_l\geq
N_\eps$ for $l= 1,\dots,s-1$, there is a point $x\in X$ such
that, for each $1\leq m\leq s$ and any $i$ with
$j_m \leq i \leq k_m$, we have:
\begin{eqnarray}
&&d(f^i(x),f^i(y_m))<\eps, \label{cond:psp1}\\
&&f^n (x)=x, \quad \textrm{ where } \;n=N_\eps+
k_s. \label{cond:psp2}
\end{eqnarray}
If we drop the periodicity condition \eqref{cond:psp2} from the above definition,
that is, if $f$ fulfills only the first condition above, then we say that $f$ has the
\emph{specification property}.

\begin{rem}\label{rem7}
Assume that $f$ has the specification property. Then by compactness we can use in the definition $s=\infty$, that is we can trace infinite sequence of points $\{y_i\}_{i \in \mathbb{N}} \subset X$ on during iterations within blocks $\{[j_m,k_m]\cap \N \}_{m \in \mathbb{N}}$ provided that $j_{m+1}-k_m\geq N_{\eps}$ for every $m$.
\end{rem}

\begin{rem}\label{rem8}
If $f$ is surjection with specification property then we can start tracing of orbit of each $y_m$ from $y_m$ itself instead of $f^{j_m}(y_m)$.
Strictly speaking, for every $\eps > 0$ the exists an integer $N_{\eps}>0$ such that for any integer $s\geq 2$, any sequence $\{y_1,\dots,y_s\} \subset X$ and any sequence of intervals $\{[j_m,k_m]\}_{m \in \mathbb{N}}$ with $j_{m+1}-k_m\geq N_{\eps}$, $m=1,\ldots,s$ there is a point $x \in X$ such that for each $m \in \mathbb{N}$ and $j_m \leq i \leq k_m$ the following condition holds:
$$
d(f^i(x),f^{i-j_m}(y_m))< \eps.
$$
\end{rem}

Let $(X,f)$, $(Y,g)$ be dynamical systems on compact metric spaces.
A continuous map $\Phi : X \ra Y$ is called a \emph{semiconjugacy} (or a \emph{factor map})
between $f$ and $g$ if $\Phi$ is surjective and $\Phi \circ f = g
\circ \Phi$. In the above case also say that $(Y,g)$ is a \emph{factor} of the system $(X,f)$.

\begin{prop}[\cite{OpWil6} Proposition 12]
\label{prop:hethom}
Let $(X,d), (Y,\rho)$ be compact metric spaces and $f\in C(X)$, $g\in C(Y)$.
Let $\Phi:X\ra Y$ be a semi-conjugacy such that $\Phi^{-1}(y)=\set{p_1,\ldots,p_k}\subset \Per(f)$ for some $y\in \Per(g)$ and $k\in \N$. Then, for any $z\in Y$ with the property $\omega_g(z)=\alpha_g(z)=\orb(y,g)$ and for any $q\in \Phi^{-1}(z)$ there exist $u,v\in \Phi^{-1}(y)$ such that $\alpha_f(q)=\orb(u,f)$ and $\omega_f(q)=\orb(v,f)$.
\end{prop}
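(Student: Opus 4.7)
The plan is to exploit the finiteness of $\Phi^{-1}(y)$ together with the periodicity of $y$ to force $\omega_f(q)$ to be finite, and then to use a continuity-driven coloring argument to reduce it to a single periodic $f$-orbit; the same argument applied to the inverses $f^{-1}$ and $g^{-1}$ will handle $\alpha_f(q)$.

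First I would denote by $\ell$ the period of $y$ under $g$, so that $\orb(y,g)=\{y,g(y),\ldots,g^{\ell-1}(y)\}$ consists of $\ell$ distinct points. Using that $g$ cyclically permutes these points, choose pairwise disjoint open sets $V_0,\ldots,V_{\ell-1}$ of arbitrarily small diameter with $g^j(y)\in V_j$ and $g(V_j)\subset V_{j+1}$ (indices mod $\ell$). Since $\omega_g(z)=\orb(y,g)$, for all $n$ beyond some threshold $g^n(z)\in\bigsqcup_j V_j$, and the nesting forces the trajectory to cycle through the indices in order; consequently there is a fixed $c\in\{0,\ldots,\ell-1\}$ with $g^{n\ell}(z)\to g^c(y)$. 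Next I would partition $\omega_f(q)=\bigsqcup_{j=0}^{\ell-1}A_j$ via $A_j:=\omega_f(q)\cap\Phi^{-1}(g^j(y))$. Because $A_0\subset\Phi^{-1}(y)=\{p_1,\ldots,p_k\}$, the set $A_0$ is finite of cardinality at most $k$; strong invariance $f(\omega_f(q))=\omega_f(q)$ combined with $f(A_j)\subset A_{j+1}$ then forces $f(A_j)=A_{j+1}$ for each $j$, so every $A_j$ is finite with $|A_j|=|A_0|$.

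The main step upgrades ``$\omega_{f^\ell}(q)$ is a finite subset of $A_c$'' to ``$\omega_{f^\ell}(q)$ is a single cycle of the permutation $\pi$ on $A_c=\{a_1,\ldots,a_m\}$ determined by $f^\ell(a_i)=a_{\pi(i)}$''. By continuity of $f^\ell$ at each $a_i$ one picks $\eps>0$ small enough that the balls $B(a_i,\eps)$ are pairwise disjoint and that every point sufficiently close to $a_i$ is sent by $f^\ell$ into $B(a_{\pi(i)},\eps)$. Since $\Phi(f^{n\ell}(q))=g^{n\ell}(z)\to g^c(y)$, every accumulation point of $\{f^{n\ell}(q)\}_n$ lies in $A_c$, and past some $N$ each $f^{n\ell}(q)$ belongs to exactly one of these balls. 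If $f^{N\ell}(q)\in B(a_{i_0},\eps)$, iteration then forces $f^{(N+n)\ell}(q)\in B(a_{\pi^n(i_0)},\eps)$ for all $n\ge 0$, so only the $\pi$-cycle through $a_{i_0}$ is ever visited; letting $\eps\to 0$ identifies $\omega_{f^\ell}(q)$ with precisely that cycle.

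Consequently $\omega_f(q)=\bigcup_{j=0}^{\ell-1}f^j(\omega_{f^\ell}(q))$ is a single periodic $f$-orbit, and since $y\in\omega_g(z)=\Phi(\omega_f(q))$ gives $A_0\ne\emptyset$, any $v\in A_0\subset\Phi^{-1}(y)$ satisfies $\omega_f(q)=\orb(v,f)$; the same reasoning applied to the homeomorphisms $f^{-1},g^{-1}$ produces $u\in\Phi^{-1}(y)$ with $\alpha_f(q)=\orb(u,f)$. The delicate point is the coloring step: one must rule out that $\omega_{f^\ell}(q)$ decomposes into several disjoint $\pi$-cycles in $A_c$, and this is prevented precisely by $f^{n\ell}(q)$ being a single point whose next image under $f^\ell$ is forced by the unique ball $B(a_i,\eps)$ containing it.
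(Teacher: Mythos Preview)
The paper does not prove this proposition; it is quoted from \cite{OpWil6} (Proposition~12 there) and used as a black box, so there is no in-paper proof to compare against.

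Your argument is correct and is the natural one. Two small points of precision are worth tightening. First, in the coloring step, continuity of $f^\ell$ gives a $\delta\le\eps$ with $f^\ell(B(a_i,\delta))\subset B(a_{\pi(i)},\eps)$, not $f^\ell(B(a_i,\eps))\subset B(a_{\pi(i)},\eps)$; the induction still works because $\omega_{f^\ell}(q)\subset A_c$ forces $f^{n\ell}(q)\in\bigcup_i B(a_i,\delta)$ for all large $n$, and disjointness of the $\eps$-balls then uniquely determines the index at each step. Once this is said, a single $\eps$ already pins down $\omega_{f^\ell}(q)$ as the $\pi$-cycle through $a_{i_0}$, so ``letting $\eps\to 0$'' is superfluous. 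Second, the $\alpha$-half implicitly uses that $f,g$ are homeomorphisms (so that $\alpha$ is defined and $\Phi\circ f^{-1}=g^{-1}\circ\Phi$); this is tacit in the statement since $\alpha_f,\alpha_g$ appear.
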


\subsection{Scrambled sets and distributional chaos}

For any $A\subset \N$ define its \emph{upper density} $\D^*( A )$ and its \emph{lower density} $\D_*( A )$ by, respectively,
\begin{eqnarray*}
\D^*( A ) &=& \limsup_{n\ra \infty} \frac{\# ( A \cap \set{1, \ldots ,n})}{n},\\
\D_*( A ) &=& \liminf_{n\ra \infty} \frac{\# ( A \cap \set{1, \ldots ,n})}{n}.
\end{eqnarray*}

The concept of distributional chaos originated from \cite{SchwSm} is one of possible extensions of the definition of Li and Yorke \cite{LiYorke}.
It can be defined as follows.
For any positive integer $n$, points $x,y\in X$ and $t>0$, let
\begin{eqnarray*}
\Phi^{(n)}_{xy}(t) &=& \frac{1}{n}\# \set{i\;:\; d(f^{i}(x),f^{i}(y))<t \quad , 0\leq i
<n},\\
\Phi_{xy}(t)&=&\liminf_{n \ra \infty} \Phi^{(n)}_{xy}(t)\\
&=&\D_*\left(\set{i\; : \; d(f^i(x),f^i(y))<t} \right),\\
\Phi^*_{xy}(t)&=&\limsup_{n \ra \infty} \Phi^{(n)}_{xy}(t)\\
&=&\D^*\left(\set{i\; : \; d(f^i(x),f^i(y))<t} \right).
\end{eqnarray*}

\begin{defn}
If $x,y$ are such that $\Phi^*_{xy}(t)=1$ for all $t>0$ and $\Phi_{xy}(s)=0$ for some $s>0$ then we say that $(x,y)$ is a \emph{DC1 pair}.

We say that a set $S$ is \emph{distributionally scrambled} if every $(x,y)\in S\times S\setminus \Delta$ is a DC1 pair and \emph{distributionally $\eps$-scrambled} for some $\eps>0$ if  $\Phi_{xy}(\eps)=0$ for any distinct $x,y\in S$.

If there exists an uncountable distributionally scrambled set then we say that $f$ is \emph{distributionally chaotic (of type $1$)}, and if
additionally $S$ is distributionally $\eps$-scrambled then we say that distributional chaos is \emph{uniform}.
\end{defn}

\begin{rem}
A distributionally $\eps$-scrambled set $S$ can be closed or invariant, but never can have these two properties at the same time \cite{BlHuSn}.
\end{rem}

\subsection{Shift spaces}

Let $\A=\set{0,1,\ldots,n-1}$. We denote
$$
\Sigma_n=\A^\Z.
$$
By \emph{a word}, we mean any element of a free monoid $\A^*$ with the set of
generators equal to $\A$. If $x\in \Sigma_n$ and $i<j$ then by
$x_{[i,j]}$ we mean a sequence $x_i, x_{i+1}, \dots, x_j$. We may
naturally identify $x_{[i,j]}$ with the word $x_{[i,j]}=x_i
x_{i+1}\dots x_j\in \A^*$. It is also very convenient to denote $x_{[i,j)}=x_{[i,j-1]}$.

We introduce a metric $\rho$ in $\Sigma_n$ by
$$
\rho(x,y)=2^{-k}, \text{ where } k=\min\set{m\geq 0: x_{[-m,m]}\neq y_{[-m,m]}}.
$$

By the $0^\infty$ we denote the element $x\in\Sigma_n$ such that $x_i=0$ for all $i\in \Z$.
The usual map on $\Sigma_n$ is the shift map $\sigma$ defined by $\sigma(x)_i=x_{i+1}$ for all $i\in\Z$. The dynamical system
$(\Sigma_n,\sigma)$ is called the
\emph{full two-sided shift} over $n$ symbols.

If $X\subset \Sigma_n$ is closed and strongly invariant (i.e. $\sigma(X)= X$) then we say that $X$ is a shift. There are many equivalent ways to define shifts, e.g. $X$ is shift iff there exists a set (of forbidden words) $\mathcal{F}\subset \A^*$ such that $X=X_\mathcal{F}$ where
$$
X_\mathcal{F}=\set{x\in \Sigma_n \; : \; x_{[i,j]}\notin \mathcal{F} \textrm{ for every } i\leq j}.
$$

One of the most important classes of shifts is the class of shifts of finite type. It contains all shifts which can be defined by finite sets of
forbidden words. Equivalently, $X\subset \Sigma_n$ is a shift of finite type if there is an integer $m>0$ and $M\subset \A^m$ such that
$$
x\in X \quad \Longleftrightarrow \quad x_{[i,i+m)}\in M \textrm{ for all } i\in \Z
$$
A shift which may be obtained as a factor of a shift of finite type is called a \emph{sofic shift}.

Another way to define shifts of finite type
and sofic shifts is to use directed graphs and labeled directed graphs respectively, called their presentations (elements of shift are identified with bi-infinite paths on graph). The reader not familiar with this approach is once again referred to \cite{Kurka} or \cite{LindMarc}.

\subsection{Wa\.zewski method for local flows}

Let $X$ be a topological space and $W$ be its subset. The following definitions come from \cite{Srzed70}.
Let $D$ be an open subset of $\R\times X$.
By a {\it local flow} on $X$ we mean a continuous map $\phi:D\longrightarrow X$, such that three conditions are satisfied:
\begin{enumerate}[(i)]
\item $I_x=\{t\in \R:\; (t,x)\in D\}$ is an open interval $(\alpha_x,\omega_x)$ containing $0$, for every $ x\in X$,
\item $ \phi(0,x)=x$, for every $x\in X$,
\item $\phi(s+t,x)=\phi(t,\phi(s,x))$, for every $ x\in X$ and $s, t\in \R$ such that $s \in I_x$ and $ t\in I_{\phi(s,x)}$.
\end{enumerate}
In the sequel we write $\phi_t (x)$ instead of $\phi (t,x)$.
We distinguish three subsets of $W$ given by
\begin{align*}
W^-=&\{x\in W: \phi([0,t]\times\{x\})\not \subset W, \text{ for every }t>0\},\\
W^+=&\{x\in W: \phi([-t,0]\times\{x\})\not \subset W, \text{ for every }t>0\},\\
W^*=&\{x\in W: \phi(t,x)\not \in W, \text{ for some }t>0\}.
\end{align*}
It is easy to see that $W^-\subset W^*$.
We say that $W^-$ is the {\it exit set of $W$}, and $W^+$ the {\it entrance set of $W$}.
We say that $W$ is a {\it Wa\. zewski set} provided that:
\begin{enumerate}[(i)]
\item if $x\in W$, $t>0$, and  $\phi([0,t]\times\{x\})\subset \cl W$ then  $\phi([0,t]\times\{x\})\subset W$,
\item $W^-$ is closed relative to $W^*$.
\end{enumerate}
\begin{prop}
\label{prop:wazewski}
If both $W$ and $W^-$ are closed subsets of $X$ then $W$ is a Wa\. zewski set.
\end{prop}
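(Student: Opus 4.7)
The statement is an essentially definitional consequence of the hypothesis, so the plan is simply to verify in order each of the two conditions in the definition of a Wa\.zewski set, using closedness of $W$ for the first and closedness of $W^-$ for the second.

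For condition (i), I would observe that $W$ being closed in $X$ means $\cl W = W$. Hence the hypothesis $\phi([0,t]\times\{x\})\subset \cl W$ is literally the same statement as $\phi([0,t]\times\{x\})\subset W$, and the implication holds trivially. No dynamical argument is needed here; the role of the local flow is purely notational.

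For condition (ii), I would recall that ``$W^-$ is closed relative to $W^*$'' means that $W^-$ is a closed set in the subspace topology of $W^*$, i.e.\ there exists a closed set $C\subset X$ with $W^- = C\cap W^*$. Since $W^-\subset W^*$ by the inclusion $W^-\subset W^*$ already noted in the paper, we have $W^- = W^-\cap W^*$. Taking $C := W^-$, which is closed in $X$ by assumption, yields the required representation. This finishes the verification.

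I do not expect any serious obstacle: both clauses reduce to unpacking the definitions. The only place where one could go wrong is misreading ``closed relative to $W^*$'' as ``closed in $X$ and contained in $W^*$'', which is a strictly stronger condition; the plan above sidesteps this by appealing directly to the subspace-topology formulation.
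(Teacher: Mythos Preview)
The paper states this proposition without proof, so there is nothing to compare against; your argument is correct and is exactly the standard verification one would expect. Both clauses are indeed immediate from the hypotheses, as you outline.
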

The function $\sigma^*: W^*\longrightarrow [0,\infty)$
\begin{equation*}
\sigma^*(x)=\sup \{t\in [0,\infty): \phi([0,t]\times\{x\})\subset W\}
\end{equation*}
is called the {\it escape-time function of $W$}.
The following lemma is called the Wa\. zewski lemma.
\begin{lem}[{\cite[Lemma 2.1 (iii)]{Srzed70}}]
\label{lem:wazewski}
Let $W$ be a Wa\. zewski set and $\sigma^*$ be its escape-time function. Then $\sigma^*$ is continuous.
\end{lem}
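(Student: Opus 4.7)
The plan is to prove continuity of $\sigma^*$ by establishing upper and lower semicontinuity separately, with the two directions drawing on the two axioms of a Wa\.zewski set in complementary ways. A preliminary structural observation does most of the work: for every $x\in W^*$ one has $\sigma^*(x)<\infty$ (some positive time of exit exists by the very definition of $W^*$), the orbit piece $\phi([0,\sigma^*(x)]\times\{x\})$ is contained in $\cl W$ by continuity of $\phi$, and so axiom (i) of a Wa\.zewski set upgrades this containment to $\phi([0,\sigma^*(x)]\times\{x\})\subset W$. In particular, $\phi(\sigma^*(x),x)\in W$, and the definition of $\sigma^*$ then forces $\phi(\sigma^*(x),x)\in W^-$, since no forward orbit segment from $\phi(\sigma^*(x),x)$ of positive length can remain in $W$ without violating the supremum.

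For upper semicontinuity I would argue by contradiction. Suppose $x_n\to x$ in $W^*$ but $\sigma^*(x_n)\geq \sigma^*(x)+\varepsilon$ along a subsequence. Then $\phi([0,\sigma^*(x)+\varepsilon]\times\{x_n\})\subset W$ for those $n$, and passing to the limit using continuity of $\phi$ gives $\phi([0,\sigma^*(x)+\varepsilon]\times\{x\})\subset \cl W$. Axiom (i) promotes this to containment in $W$, contradicting the definition of $\sigma^*(x)$. This half relies only on axiom (i).

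Lower semicontinuity is more delicate and is where axiom (ii) enters. Assume $x_n\to x$ in $W^*$ and $\sigma^*(x_n)\to s_\infty$ along a subsequence with $s_\infty<\sigma^*(x)$. By the preliminary observation, $\phi(\sigma^*(x_n),x_n)\in W^-\subset W^*$, and joint continuity of $\phi$ gives $\phi(\sigma^*(x_n),x_n)\to \phi(s_\infty,x)$. Because $s_\infty<\sigma^*(x)$, the limit lies in $W$, and because $x\in W^*$ its forward orbit still leaves $W$ at some time past $s_\infty$, so $\phi(s_\infty,x)\in W^*$. Axiom (ii), the closedness of $W^-$ in $W^*$, then forces $\phi(s_\infty,x)\in W^-$. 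But the inclusion $\phi([s_\infty,\sigma^*(x)]\times\{x\})\subset W$ with $\sigma^*(x)-s_\infty>0$ directly contradicts the defining property of $W^-$, completing the argument.

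The main obstacle is that $W$ is assumed neither open nor closed, so the naive arguments (which would use openness of the complement to get immediate exit after $\sigma^*(x)$, or closedness of $W$ to pass orbit segments to the limit) are unavailable. The two Wa\.zewski axioms function as the minimal substitutes: axiom (i) acts as a quasi-closedness of $W$ along orbits that powers upper semicontinuity, while axiom (ii) prevents the immediate-exit set from collapsing under limits, which is exactly what lower semicontinuity requires. Identifying and proving $\phi(\sigma^*(x),x)\in W^-$ is the pivot on which both halves turn.
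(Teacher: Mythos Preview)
The paper does not supply its own proof of this lemma; it is quoted verbatim as \cite[Lemma~2.1(iii)]{Srzed70} and used as a black box. So there is nothing in the present paper to compare your argument against.

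That said, your proof is correct and is precisely the classical argument. The only points worth tightening are minor domain-of-definition issues for the local flow: in the upper-semicontinuity step you should note that since $x\in W^*$ there is some $t_0>\sigma^*(x)$ with $\phi(t_0,x)\notin W$, so $\phi(\cdot,x)$ is defined past $\sigma^*(x)$ and you may take $\varepsilon<t_0-\sigma^*(x)$ before invoking joint continuity on $[0,\sigma^*(x)+\varepsilon]$; and in the lower-semicontinuity step the convergence $\phi(\sigma^*(x_n),x_n)\to\phi(s_\infty,x)$ tacitly uses openness of the domain $D$ together with $s_\infty<\sigma^*(x)<\omega_x$. Both are routine once stated. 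Your identification of the pivot $\phi(\sigma^*(x),x)\in W^-$ and the division of labor between axioms (i) and (ii) is exactly right.
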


\subsection{Processes and nonautonomous dynamics}
Let $X$ be a topological space and $\Omega \subset \mathbb{R} \times
\mathbb{R} \times X$ be an open set.

By a {\it local process} on $X$ we mean a continuous map $\varphi:
\Omega \longrightarrow X$, such that the following three conditions
are satisfied:
\begin{enumerate}[(i)]
\item $\forall \sigma\in \mathbb{R}$, $ x\in X$, $\{t\in
\mathbb{R}:\; (\sigma,t,x)\in \Omega\}$ is an open interval containing $0$,
\item $\forall \sigma \in \mathbb{R}$, $ \varphi(\sigma,0,\cdot)={\mathrm{id}} _X$,
\item $\forall x \in X$, $ \sigma, s \in \mathbb{R}$,  $ t\in \mathbb{R}$ if $ (\sigma,s,x)\in \Omega$,
$(\sigma+s,t,\varphi(\sigma,s,x))\in \Omega$  then $(\sigma,s+t,x)\in \Omega$ and $\varphi(\sigma,s+t,x)=\varphi(\sigma+s,t,\varphi(\sigma,s,x))$.
\end{enumerate}
For abbreviation, we write $\varphi_{(\sigma,t)}(x)$ instead of
$\varphi(\sigma,t,x)$.

Given a local process $\varphi$ on $X$ one can define a local flow $\phi$ on $\R\times X$ by
\begin{align*}
\phi(t,(\sigma,x))=(t+\sigma, \varphi(\sigma,t,x)).
\end{align*}
Using this identification, we can speak about Wa\.zewski sets for local processes.

Let $M$ be a smooth manifold and let $v: \mathbb{R} \times M \longrightarrow
TM$ be a time-dependent vector field. We assume that $v$ is regular
enough to guarantee that for every $(t_0,x_0)\in \mathbb{R}\times M$ the
Cauchy problem
\begin{align}
\label{1row} &\dot{x}= v(t,x),\\
\label{warp} &x(t_0)= x_0
\end{align}
has a unique solution. Then the equation \eqref{1row} generates a local
process $\varphi$ on $M$ by
$\varphi_{(t_0,t)}(x_0)=x(t_0,x_0,t+t_0)$, where $ x(t_0,x_0,\cdot)$ is the
solution of the Cauchy problem \eqref{1row}, \eqref{warp}.

Let $T$ be a positive number. We assume that $v$ is $T$-periodic
in $t$. It follows that the local process $\varphi$ is $T$-periodic,
i.e.,
\begin{align*}
\forall \sigma, t\in \mathbb{R}\;  \varphi_{(\sigma+T,t)}=\varphi_{(\sigma,t)},
\end{align*}
hence there is a one-to-one correspondence between $T$-periodic
solutions of (\ref{1row}) and fixed points of the Poincar\'{e} map
$P_T=\varphi_{(0,T)}$.

\subsection{Periodic isolating segments for processes}
\label{sec:pis}
Let $X$ be a topological space and $T$ be a positive number. We
assume that $\varphi$ is a $T$-periodic local process on $X$.

For any set $Z\subset \mathbb{R} \times X$ and $a,b,t \in \mathbb{R}$, $a<b$ we define
\begin{align*}
Z_t&=\{x\in X: (t,x)\in Z\}, \\
Z_{[a,b]}&=\{(t,x)\in Z: t\in [a,b]\}.
\end{align*}

Let $\pi_1: \mathbb{R} \times X \longrightarrow \mathbb{R}$ and $\pi_2: \mathbb{R} \times X
\longrightarrow X$ be projections on, time and space variable
respectively.

A compact set $W\subset [a,b]\times X$ is called an {\it isolating
segment over $[a,b]$ for $\varphi$} if it is ENR (Euclidean
neighborhood retract, e.g. see \cite{Dold})  and there are $W^{--},
W^{++} \subset W$ compact ENR's (called, respectively, the {\it
proper exit set} and {\it proper entrance set}) such that
\begin{enumerate}
\item $\partial W = W^- \cup W^+$,
\item $W^-=W^{--} \cup (\{b\}\times W_b)$ , $W^+=W^{++} \cup
(\{a\}\times W_a)$,
\item there exists homeomorphism $h:
[a,b]\times W_a \longrightarrow W$ such that $\pi_1 \circ h =
\pi_1$ and $h([a,b]\times W^{--}_a)=W^{--}$, $h([a,b]\times
W^{++}_a)=W^{++}$.
\end{enumerate}
Every isolating segment is also a Wa\. zewski set (for the local flow associated to a process $\varphi$). We say that an isolating segment $W$ over $[a,b]$ is {\it ($b-a$)-periodic} (or simply {\it periodic}) if $W_a = W_b$,
$W^{--}_a=W^{--}_b$ and $W^{++}_a=W^{++}_b$. Let $T>0$. Given the set $Z\subset [0,T]\times X$ such that $Z_0=Z_T$ we define its infinite concatenation by
$$
Z^\infty=\left\{(t,z)\in \R\times X: z\in Z_{t \hskip-5pt \mod T} \right\}.
$$

Let $W$ be a periodic isolating segment over $[a,b]$. The
homeomorphism $h$ induces $m: (W_a,W^{--}_a)\longrightarrow
(W_b,W^{--}_b)= (W_a,W^{--}_a) $ a {\it monodromy homeomorphism}
given by
\begin{align*}
 m(x)=\pi_2h(b,\pi_2h^{-1}(a,x)).
\end{align*}
A different choice of the homeomorphism $h$ leads to a map which
is homotopic to $m$. It follows that the automorphism in singular
homology
\begin{align*}
\mu_W = H(m): H(W_a,W^{--}_a)\longrightarrow H(W_a,W^{--}_a)
\end{align*}
is an invariant of segment $W$.

The following theorem, proved by R. Srzednicki in \cite{Srzed10}, play
the crucial role in the method of isolating segments.
\begin{thm}\cite[Theorem 5.1]{SrzedWojZgli}
\label{tws}

Let $\varphi$ be a local process on $X$ and let $W$ be a
periodic isolating segment over $[a,b]$. Then the set
\begin{align*}
U=U_W=\{x\in W_a: \varphi_{(a,t-a)}(x)\in W_{t}\setminus W_t^{--}\;
\forall t\in [a,b]\}
\end{align*}
is open in $W_a$ and the set of fixed points of the restriction
\begin{equation*}
\varphi_{(a,b-a)}|_U:U\longrightarrow W_a
\end{equation*}
is compact. Moreover
\begin{align*}
{\mathrm{ind}} (\varphi_{(a,b-a)}|_U)={\mathrm{Lef}}(\mu_W)
\end{align*}
where ${\mathrm{Lef}}(\mu_W)$ denotes the Lefschetz number of $\mu_W$.

In particular, if
\begin{equation*}
{\mathrm{Lef}}(\mu_W) \neq 0
\end{equation*}
then $\varphi_{(a,b-a)}$ has a fixed point in $W_a$.
\end{thm}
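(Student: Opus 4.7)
The plan is to first establish the topological properties of $U$ --- openness in $W_a$ and compactness of the fixed-point set --- and then to relate $\mathrm{ind}(\varphi_{(a,b-a)}|_U)$ to $\mathrm{Lef}(\mu_W)$ via a homotopy inside the trivialized segment.

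First I would show openness of $U$ by combining continuity of $\varphi$ with the closedness of the proper exit set. For $x\in U$ the compact trajectory $\{(t,\varphi_{(a,t-a)}(x)):t\in[a,b]\}$ lies in the relatively open set $W\setminus W^{--}$; a standard tube-lemma argument then produces a neighborhood of $x$ in $W_a$ whose forward images also stay in $W\setminus W^{--}$ for $t\in[a,b]$. For compactness of the fixed-point set $\mathrm{Fix}(\varphi_{(a,b-a)}|_U)$, I would argue that it is closed in $W_a$ (hence compact) and cannot accumulate at $\partial U\subset W_a$: a limit point would have to hit $W_a^{--}$, yet the isolating-segment property forces any trajectory touching $W^{--}$ to exit $W$ in forward time, contradicting the fact that fixed points have $b$-periodic orbits contained in $W$.

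Second, for the index identity, I would use the homeomorphism $h:[a,b]\times W_a\to W$ to rectify the segment, so that after trivialization the Poincar\'e map $\varphi_{(a,b-a)}$ becomes a self-map of (a subset of) $W_a$; periodicity $W_a=W_b$, $W_a^{--}=W_b^{--}$ makes this a self-map of the pair $(W_a,W_a^{--})$. I would then build an explicit homotopy connecting this rectified Poincar\'e map to the monodromy $m$, e.g.\ by interpolating in $h$-coordinates between the transported flow and the identity in the spatial direction while following the prescribed path in $[a,b]$. Throughout the homotopy I must verify that fixed points remain in a compact subset of $U$, avoiding $W_a^{--}$; this is where the proper-exit structure is used once more --- any would-be fixed point drifting toward $W_a^{--}$ during the homotopy is pushed out of $W$ by the exit dynamics, preventing it from being a fixed point of any map in the family.

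Given such a homotopy, homotopy invariance and excision of the fixed-point index yield $\mathrm{ind}(\varphi_{(a,b-a)}|_U)=\mathrm{ind}(m,W_a\setminus W_a^{--})$, and the Lefschetz--Hopf theorem for self-maps of the ENR pair $(W_a,W_a^{--})$ identifies this latter index with $\mathrm{Lef}(\mu_W)$, where $\mu_W=H(m)$. The final assertion is then immediate: nontriviality of $\mathrm{Lef}(\mu_W)$ forces $\mathrm{Fix}(\varphi_{(a,b-a)}|_U)\neq\emptyset$. The main obstacle is constructing the homotopy between the rectified Poincar\'e map and the monodromy while rigorously excluding fixed-point escape through $W_a^{--}$; the ENR hypothesis on $W$, $W^{--}$, $W^{++}$ and the trivialization $h$ are what make this control possible, and getting a clean, coordinate-free formulation of the interpolation is the technical crux.
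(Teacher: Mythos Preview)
The paper does not prove this theorem at all: it is quoted verbatim as \cite[Theorem 5.1]{SrzedWojZgli} and attributed to Srzednicki (\cite{Srzed10}), with no argument given. So there is no ``paper's own proof'' to compare your proposal against.

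That said, your outline is broadly in the right spirit for how the original result is established: one uses the trivialization $h$ to conjugate the problem to a product, controls the exit dynamics via the Wa\.zewski property of the segment, and invokes Lefschetz--Hopf on the ENR pair $(W_a,W_a^{--})$. The part you correctly flag as the crux --- building a homotopy from the rectified Poincar\'e map to the monodromy while keeping the fixed-point set away from $W_a^{--}$ --- is indeed where all the work lies in Srzednicki's original proof, and your sketch does not actually carry it out. In particular, the interpolation you describe (``between the transported flow and the identity in the spatial direction'') is not quite the construction used; the standard argument goes through a retraction built from the escape-time function $\sigma^*$ and the Wa\.zewski retraction onto the exit set, which is what makes the ENR hypotheses on $W^{--}$ and $W^{++}$ essential. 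Your openness and compactness arguments are fine.
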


For the definition of the fixed point index and Lefschetz
number the reader is referred to \cite{Dold}.

\subsection{Continuation method}
Let $X$ be a metric space. We denote by $\rho$ the corresponding distance on $\R\times X$. Let $\varphi$ be a local process on $X$, $T>0$ and $W, U$ be two subsets of $\R\times X$. We consider the following conditions (see \cite{SrzedWojZgli, WojZgli4}):
\begin{itemize}
\item[(G1)] $W$ and $U$ are $T$-periodic segments for $\varphi$ which satisfy
\begin{equation}
\label{war:G1}
U\subset W, \quad (U_0,U_0^{--})=(W_0,W_0^{--}),
\end{equation}
\item[(G2)] there exists $\eta>0$ such that for every $(t,w)\in W^{--}$ and $(t,z)\in U^{--}$ there exists $\tau_0>0$ such that for $0<\tau<\tau_0$ holds $(t+\tau,\varphi(t,\tau,w))\not \in W$, $\rho((t+\tau_0,\varphi_{(t,\tau_0)}(w)),W)>\eta$ and $(t+\tau,\varphi(t,\tau,z))\not \in U$, $\rho((t+\tau_0,\varphi_{(t,\tau_0)}(z)),U)>\eta$.
\end{itemize}
Let $K$ be a positive integer and let $E[1], \ldots, E[K]$ be disjoint closed subsets of the essential exit set $U^{--}$ which are $T$-periodic, i.e. $E[l]_0=E[l]_T$, and such that
\begin{equation*}
 U^{--}=\bigcup_{l=1}^K E[l].
\end{equation*}
(In applications we will use the decomposition of $U^{--}$ into connected components).

Before we can recall the method of continuation, we need one more definition related to the set $W\subset \R\times X$.
For $n\in \N$, $D\subset W_0$ and every finite sequence $c=(c_0,\ldots, c_{n-1})\in \{0,1,\ldots, K\}^{\{0,1,\ldots,n-1 \}}$ we define $D_c$ as a set of points satisfying the following conditions:
\begin{itemize}
\item[(H1)] $\varphi_{(0,lT)}(x)\in D$ for $l\in \{0,1,\ldots,n\}$,
\item[(H2)] $\varphi_{(0,lT+t)}(x)\in W_t\setminus W_t^{--}$ for $t\in [0,T]$ and $l\in \{0,1,\ldots,n-1\}$,
\item[(H3)] for each $l=0,1,\ldots, n-1$, if $c_l=0$, then $\varphi_{(0,lT+t)}(x)\in U_t\setminus U_t^{--}$ for $t\in (0,T)$,
\item[(H4)] for each $l=0,1,\ldots, n-1$, if $c_l>0$, then $\varphi_{(0,lT)}(x)$ leaves $U$ in time less than $T$ through $E[c_l]$.
\end{itemize}

Let $\Omega\subset \R\times \R\times X$ be open and
\begin{equation*}
[0,1]\times \Omega\ni (\lambda, \sigma,t,x)\mapsto \varphi^\lambda_{(\sigma,t)}(x)\in X
\end{equation*}
be a continuous family of $T$-periodic local processes on $X$. We say that the conditions (G1) and (G2) are satisfied {\it uniformly} (with respect to $\lambda$) if they are satisfied with $\varphi$ replaced by $\varphi^\lambda$ and the same $\eta$ in (G2) is valid for all $\lambda\in [0,1]$.

We write $D_c^\lambda$ for the set defined by the conditions (H1)--(H4) for the local process $\varphi^\lambda$.

The following theorem plays the crucial role in the method of continuation.
\begin{thm}[see \cite{SrzedWojZgli, WojZgli1}]
 \label{thm:cont}
Let $\varphi^\lambda$ be a continuous family of $T$-periodic local processes such that (G1) and (G2) hold uniformly. Then for every $n>0$ and every finite sequence $c=(c_0,\ldots, c_{n-1})\in \{0,1,\ldots, K\}^{\{0,1,\ldots,n-1 \} }$ the fixed point indices ${\mathrm{ind}}\left(\varphi^\lambda_{(0,nT)}\mid_{(W_0\setminus W_0^{--})_c^\lambda}\right)$ are correctly defined and equal each to the other (i.e. do not depend on $\lambda \in [0,1]$).
\end{thm}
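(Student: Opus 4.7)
The plan is to establish the statement in three stages: first verify that each stratum $(W_0\setminus W_0^{--})_c^\lambda$ is open in $X$ so that the restriction $\varphi^\lambda_{(0,nT)}\mid_{(W_0\setminus W_0^{--})_c^\lambda}$ is a continuous map defined on an open set; next produce a uniform (in both $\lambda$ and $c$) separation estimate which confines the fixed-point set to a compact subset of that stratum; and finally invoke homotopy invariance of the fixed-point index over $\lambda\in [0,1]$.

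For openness, apply the Wa\.zewski lemma (Lemma~\ref{lem:wazewski}) to $W$ and to $U$, which are Wa\.zewski sets for the local flow associated with $\varphi^\lambda$ by the segment structure. The corresponding escape-time functions depend continuously on the initial point, and by continuity of the family $(\lambda,\sigma,t,x)\mapsto \varphi^\lambda_{(\sigma,t)}(x)$ they depend continuously on $\lambda$ as well. Condition (H2) therefore carves out a set open in $W_0$, and (H1) is just the countably many conditions $\varphi^\lambda_{(0,lT)}(x)\in W_0\setminus W_0^{--}$, each of which is open because $W_0^{--}$ is closed. As the $E[l]$'s are pairwise disjoint closed subsets of $U^{--}$, the label $c_l\in\{0,1,\ldots,K\}$ recording where the $l$-th excursion leaves $U$ is a locally constant function on the open set where that excursion takes place, so intersecting (H1)--(H4) over $l=0,\ldots,n-1$ leaves us with an open stratum $(W_0\setminus W_0^{--})_c^\lambda\subset X$.

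Now take any fixed point $x$ of $\varphi^\lambda_{(0,nT)}\mid_{(W_0\setminus W_0^{--})_c^\lambda}$. Being periodic, its trajectory lies inside $W\setminus W^{--}$ except when (H4) forces it to leave through some $E[c_l]\subset U^{--}$, in which case (G2) guarantees that after a short time $\tau_0$ the trajectory has moved distance at least $\eta$ away from $U$, hence from every other $E[l']$. Combining this uniform $\eta$ with compactness of $[0,nT]$ and continuity of $\varphi^\lambda$ yields a $\delta>0$, uniform in $\lambda\in[0,1]$, such that every such fixed point satisfies
\begin{equation*}
\dist\bigl(\varphi^\lambda_{(0,t)}(x),W^{--}\bigr)\geq \delta\quad\text{and}\quad \dist\bigl(\varphi^\lambda_{(0,t)}(x),E[l']\bigr)\geq \delta
\end{equation*}
for all $t\in [0,nT]$ and every $l'$ inconsistent with the code $c$. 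Consequently the fibered fixed-point set
\begin{equation*}
\Lambda_c=\set{(\lambda,x)\in [0,1]\times X \; : \; x\in (W_0\setminus W_0^{--})_c^\lambda,\ \varphi^\lambda_{(0,nT)}(x)=x}
\end{equation*}
is compact and stays in a fixed compact neighbourhood of the (open) set $\bigcup_{\lambda}\{\lambda\}\times (W_0\setminus W_0^{--})_c^\lambda$ in $[0,1]\times X$, which is what one needs for the fixed-point index to be defined on each slice.

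The main obstacle is precisely this last step: showing that as $\lambda$ varies no fixed point can migrate across the topological boundary of $(W_0\setminus W_0^{--})_c^\lambda$, for otherwise the index could jump. The uniformity of (G1)--(G2) is tailored exactly for this, since it forces the boundary behaviour of $W$ and $U$ to be controlled by a single constant $\eta$ along the whole homotopy. Once the compactness of $\Lambda_c$ is in hand, the continuous family $\bigl(\varphi^\lambda_{(0,nT)}\mid_{(W_0\setminus W_0^{--})_c^\lambda}\bigr)_{\lambda\in [0,1]}$ is an admissible homotopy in the sense of fixed-point index theory on the ENR $X$, and the classical homotopy invariance of the index (see \cite{Dold}) gives the claimed equality ${\mathrm{ind}}(\varphi^0_{(0,nT)}\mid_{(W_0\setminus W_0^{--})_c^0})={\mathrm{ind}}(\varphi^\lambda_{(0,nT)}\mid_{(W_0\setminus W_0^{--})_c^\lambda})$ for every $\lambda\in[0,1]$.
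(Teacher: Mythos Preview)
The paper does not supply its own proof of Theorem~\ref{thm:cont}; the result is quoted from \cite{SrzedWojZgli, WojZgli1} and used as a black box in Section~\ref{sec:appl}. Consequently there is nothing in the present paper to compare your argument against.

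That said, your outline is the standard one underlying the cited references: continuity of the escape-time function (Wa\.zewski lemma) gives openness of the strata $(W_0\setminus W_0^{--})_c^\lambda$; the uniform constant $\eta$ in (G2) prevents fixed points from approaching the boundary of a stratum, yielding compactness of the fibered fixed-point set $\Lambda_c$; and the homotopy property of the fixed-point index on ENRs (as in \cite{Dold}) then gives independence of $\lambda$. The one place where your sketch is thinner than it should be is the passage from the uniform $\eta$ to the uniform $\delta$: you must rule out not only that a fixed point drifts toward $W^{--}$ or toward the wrong exit component $E[l']$, but also that the \emph{time} at which the trajectory exits $U$ drifts toward $0$ or $T$ (which would change the code $c$ at a neighbouring coordinate). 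This is exactly where the isolating-segment hypotheses on $U$ and the uniform version of (G2) for $U$ are used in \cite{SrzedWojZgli, WojZgli1}, and it deserves an explicit sentence rather than being absorbed into ``combining this uniform $\eta$ with compactness.''
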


\section{Finite to one factor maps}

\begin{thm}\label{thm:1}
Let $(X,f)$, $(Y,g)$ be dynamical systems, where $Y$ is an infinite set, let $y\in \Tran(g)\cap \Rec(g)$ and let $p\in Y$ be a fixed point of $g$.
If $\pi\colon (X,f)\ra (Y,g)$ is such that $\pi^{-1}(p)$ is finite and consists only of fixed points for $f$, then there is a perfect and $f$-invariant set $Z\subset X$ such that $\pi(Z)=Y$ and
\begin{enumerate}
\item\label{thm:1:c1} if $\Phi_{py}^*(\alpha)=1$ for every $\alpha>0$ then there is a residual subset $R\subset Z\times Z$ such that $\Delta\subset R$ and $\Phi^*_{f^m(u),f^n(v)}(s)=1$ and $\Phi^*_{f^m(u),f^n(u)}(s)=1$ for every $s>0$, every $(u,v)\in R$ and every $m,n\geq 0$,
\item\label{thm:1:c2} if $\Phi_{f^n(y)y}(\beta)=0$ for some $\beta>0$ and all $n>0$ then there is $\gamma$ and a residual subset $Q\subset Z\times Z\setminus \Delta$ such that for any $(u,v)\in Q$ we have:
\begin{enumerate}[(a)]
\item\label{thm:1:c2a} $\Phi_{f^m(u),f^n(v)}(\gamma)=0$ for every  $m,n\geq 0$,
\item\label{thm:1:c2b} $\Phi_{f^m(u),f^n(u)}(\gamma)=0$ for every  $m\neq n$, $m,n>0$.
\end{enumerate}
\end{enumerate}
\end{thm}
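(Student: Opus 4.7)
My plan is to construct a compact perfect $f$-invariant set $Z\subset X$ with $\pi(Z)=Y$ (via $Z:=\omega(q,f)$ for some lift $q\in\pi^{-1}(y)$) and then to obtain $R$ and $Q$ as residual sets by the Baire category method, writing each as a countable intersection of open sets that are dense in $Z\times Z$ or $Z\times Z\setminus\Delta$ respectively. The basic structural properties of $Z$ are standard: compactness and $f$-invariance come from the definition; $\pi(Z)=Y$ follows by compactness together with density of $\{g^{n_k}(y)=\pi(f^{n_k}(q))\}$ in $Y$; and perfectness comes from $Y$ being infinite combined with the fact that an infinite $\omega$-limit set of a single orbit has no isolated points. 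In particular $\{p_1,\dots,p_k\}=\pi^{-1}(p)\subset Z$.

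For condition~(1), set
\begin{equation*}
U_{s,\eps,N,m,n}=\set{(u,v)\in Z\times Z : \exists k\geq N,\ \tfrac{1}{k}\#\set{i<k : d(f^{m+i}(u),f^{n+i}(v))<s}>1-\eps},
\end{equation*}
which is open by continuity of $f$, and define $R:=\bigcap_{s,\eps,N,m,n}U_{s,\eps,N,m,n}$. Density of each $U_{s,\eps,N,m,n}$ is shown by approximating $(u_0,v_0)$ by $(f^a(q),f^b(q))$: since $\Phi^{*}_{py}(\alpha)=1$ and $\pi^{-1}(p)$ is finite, uniform continuity of $\pi$ together with continuity of $f^c$ at each fixed point $p_l$ (with $c=(n+b)-(m+a)$) implies that whenever $f^{(m+a)+j}(q)$ lies in a sufficiently small $\delta$-neighborhood of $p_l$ the iterate $f^{(n+b)+j}(q)=f^{c}(f^{(m+a)+j}(q))$ lies in $B(p_l,s/2)$ as well, forcing $d(f^{m+j}(u),f^{n+j}(v))<s$ on a set of $j$ of upper density $1$. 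The inclusion $\Delta\subset R$ is produced by the same mechanism applied to a single orbit, using a compactness argument to propagate the visit frequency from $q$ to every accumulation point of its orbit.

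For condition~(2), choose $\gamma>0$ from uniform continuity of $\pi$ so that $d(\pi(x),\pi(x'))\geq\beta$ implies $d(x,x')\geq\gamma$, and analogously define
\begin{equation*}
V_{\eps,N,m,n}=\set{(u,v)\in Z\times Z\setminus\Delta : \exists k\geq N,\ \tfrac{1}{k}\#\set{i<k : d(f^{m+i}(u),f^{n+i}(v))<\gamma}<\eps}.
\end{equation*}
Density of $V_{\eps,N,m,n}$ is shown by approximating $(u_0,v_0)\in Z\times Z\setminus\Delta$ by $(f^a(q),f^b(q))$ with $a,b$ chosen so that the shift $(a+m)-(b+n)$ is nonzero for the specific $(m,n)$ at hand; the hypothesis $\Phi_{g^{|a+m-b-n|}(y),y}(\beta)=0$ then transfers through $\pi$ to $\Phi_{f^m(u),f^n(v)}(\gamma)=0$, placing $(u,v)$ inside $V_{\eps,N,m,n}$ for sufficiently large $k$. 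Case~(2b) is handled by the same procedure with $u=v$ and $m\neq n$. Taking $Q:=\bigcap V_{\eps,N,m,n}$ yields the residual set.

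The principal technical obstacle is the transfer of density statements through $\pi$ in condition~(1): upper density $1$ of visits of $\{g^{i}(y)\}$ to a neighborhood of $p$ does not \emph{a priori} imply upper density $1$ of simultaneous visits of two shifted orbits to a common small neighborhood, because intersections of upper-density-$1$ sets may fail to have the same property. The remedy is the fixedness of each $p_l\in\pi^{-1}(p)$: once the orbit enters a sufficiently small $\delta$-neighborhood of $p_l$, it stays close to $p_l$ under any bounded number of iterates, and so both shifted copies land near the same $p_l$ at the same time. Coordinating this across the countable family of parameters $(s,\eps,N,m,n)$ while keeping the approximating pair $(u,v)$ in an arbitrarily small neighborhood of the prescribed $(u_0,v_0)$ — and securing $\Delta\subset R$ via the aforementioned compactness argument — is the technical heart of the argument; Baire category then produces the residual sets.
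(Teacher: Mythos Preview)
Your overall strategy—building $Z$ from an $\omega$-limit set, then producing $R$ and $Q$ by Baire category as countable intersections of open dense subsets of $Z\times Z$, using the fixed-point structure of $\pi^{-1}(p)$ to transfer the density hypotheses from $Y$ to $X$—is precisely the paper's approach. The key mechanism you isolate (once $f^{j}(q)$ enters a small neighborhood of some $p_l\in\pi^{-1}(p)$, fixedness of $p_l$ keeps every bounded iterate near the \emph{same} $p_l$, so shifted copies of the orbit are simultaneously close) is exactly the heart of the paper's computation for part~(1), and your choice of $\gamma$ via uniform continuity of $\pi$ is exactly the paper's device for part~(2).

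There is, however, a real gap in the construction of $Z$. You put $Z:=\omega(q,f)$ for an \emph{arbitrary} lift $q\in\pi^{-1}(y)$ and then prove density of $U_{s,\eps,N,m,n}$ by approximating $(u_0,v_0)\in Z\times Z$ with pairs $(f^a(q),f^b(q))$. But nothing forces $q\in Z$, so your approximating pairs need not lie in $Z\times Z$ at all. Since $U_{s,\eps,N,m,n}\subset Z\times Z$, showing that $(f^a(q),f^b(q))$ satisfies the defining open condition in $X\times X$ does not by itself place any point of $Z\times Z$ inside $U_{s,\eps,N,m,n}$; in principle the open condition could hold on a set whose intersection with $Z\times Z$ misses a whole relatively open patch. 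Trying to close this by uniform continuity runs into a circular choice of parameters: the modulus $\delta$ on which the witness $k$ transfers depends on $k$, which depends on the shift $c=(n+b)-(m+a)$, which depends on $a,b$, which you want to choose only after $\delta$ is known. The paper repairs this with a short Zorn's lemma step: among all closed $f$-invariant $A\subset X$ with $\pi(A)=Y$ take a minimal one, call it $Z$, and observe that for any $z\in Z\cap\pi^{-1}(y)$ one has $\omega(z,f)=Z$ by minimality. Then $z$ is recurrent with dense forward orbit \emph{inside} $Z$, so the approximants $(f^{m'}(z),f^{n'}(z))$ genuinely live in $Z\times Z$ and the density argument goes through cleanly. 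Your sketch needs exactly this refinement (or an equivalent device producing a transitive point of $Z$ lying over $y$).

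A smaller point: the inclusion $\Delta\subset R$ is not obtained by a compactness argument of the sort you describe. The paper treats the diagonal separately, introducing an additional family of open dense sets $W_l^m\subset Z$ (points $u$ with $\Phi^{(k)}_{f^m(u),u}(1/l)>1-1/l$ for some $k$) and writing $R$ as an intersection involving both $R_l^{m,n}\cup\Delta$ and $W_l^m\times W_l^m$. Your one-line justification would need to be replaced by this explicit construction.
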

\begin{proof}
The first step is to define set $Z$.
To do so, consider the family $\Bl$ of closed and $f$-invariant subsets of $X$ such that $\pi(A)=Y$ for any $A\in \Bl$. We claim that any chain
in the partial ordering induced by sets inclusion has a lower bound. Simply, let $A_0 \supset A_1 \supset \ldots$, where each $A_i\in \Bl$.
If we put $A_\infty \bigcap_{i=0}^\infty A_i$ then it is closed, nonempty, and is also invariant since
$$
f(A_\infty)=f(\bigcap_{i=0}^\infty A_i)\subset \bigcap_{i=0}^\infty f(A_i)\subset \bigcap_{i=0}^\infty A_i=A_\infty.
$$
Additionally $\pi^{-1}(y)\cap A_i\neq \emptyset$ and so for every $i$ there is $y_i\in A_i$ such that $\pi(y_i)=y$. But since $\set{y_j}_{j=i}^\infty\subset A_i$, without loss of generality we may assume that $y^*=\lim_{i\ra \infty}y_i$ is well defined. Note that $y^*\in A_\infty\cap \pi^{-1}(y)$ and therefore $A_\infty \in \Bl$ showing that $A_\infty$ is the claimed lower bound.

Let $Z$ be a minimal element in $\Bl$ provided by Kuratowski-Zorn lemma and let $z\in \pi^{-1}(y)\cap Z$. We claim that $z\in \omega(z,f)$. Simply, if we fix any $q\in Y$ then there is a sequence $n_i$ such that $q=\lim_{i\ra \infty} f^{n_i}(y)$ and additionally, going to a subsequence if necessary, we may assume that $q'=\lim_{i\ra \infty} f^{n_i}(z)$ is also well defined. Then
$$
q=\lim_{i\ra \infty} f^{n_i}(y)=\lim_{i\ra \infty} f^{n_i}(\pi(z))=\pi(\lim_{i\ra \infty} f^{n_i}(z))=\pi(q')
$$
and thus $\pi(\omega(z,f))=Y$. But $Z$ is minimal element in $\Bl$, in particular $f(Z)=Z$, so since $z\in Z$, we have $\omega(z,f)\subset Z$ and $\omega(z,f)\in \Bl$ which immediately implies that $\omega(z,f)=Z$.
If $Z$ is a finite set then $Y$ is finite as well, which is a contradiction. Then the only possibility is that $Z$ is infinite, and so it is
a perfect set, since it is an $\omega$-limit set.

Now we are ready to prove \eqref{thm:1:c1}.
Let us fix any integer $l>0$ and denote by $R_l^{m,n}$ the set of these pairs $(q,r)\in Z\times Z$ such that $\Phi_{f^m(q)f^n(r)}^{(k)}(1/l)>1-1/l$ for some $k>l$. It is obvious that $R_l$ is
open in $Z$ by uniform continuity of $f$.

Fix any nonempty open sets $U,V\subset Y$. There are $m'<n'$ such that $f^{m'}(z)\in U$, $f^{n'}(z)\in V$. Without loss of generality we may assume that $m\leq n$ and denote $K=n+n'$.
Denote $\pi^{-1}(\set{p})=\set{u_0,\ldots, u_s}$ and let $\eps=\min\set{d(u_i,u_j)/4 \; : \; i\neq j}$ and
assume additionally that $\eps<1/2l$.
Let $0<\delta<\eps$ be such that $\diam (f^i(A))<\eps$ for $i=0,1,\ldots, k$ provided that $\diam (A)<\delta$. There is $\eta>0$ such that
$$
\pi^{-1}(B(p,\eta)) \subset \bigcup_{j=0}^s B(u_i,\delta).
$$

Since $D^*\left(\set{i\; : \; d(f^i(y),f^i(p))<\eta} \right)=1$ it is easy to verify that we also have that
$$
D^*\left(\set{i\; : \; d(f^j(y),p)<\eta \textrm{ for }j=i,\ldots,i+K} \right)=1.
$$
By the above observation, we immediately get $k>l$ such that
$$
\frac{1}{k} \# \set{m'+m\leq i <m'+m+k \; : \; d(f^j(y),p)<\eta \textrm{ for }j=i,\ldots,i+K}> 1-\frac{1}{l}
$$
Note that if $d(f^{i+j}(y),p)<\eta$ for $j=0,\ldots,K$ then for any $j$ there is $\phi(j)$ such that $d(f^{i+j}(z),u_{\phi(j)})<\delta$.
But then $f(f^{i+j+1}(z),u_{\phi(j)})<\eps$ which implies $\phi(j)=\phi(j+1)$ since otherwise $d(u_{\phi(j)},u_{\phi(j+1)})>2\eps$ which is impossible.
In particular, if $d(f^{i+j}(y),p)<\eta$ for $j=0,\ldots,K$ then
$$
d(f^{i}(z),f^{i+n-m+n'-m'}(z))<2\eps<1/l.
$$
By the above we calculate that
\begin{eqnarray*}
1-\frac{1}{l} &\leq & \frac{1}{k} \# \set{0\leq i <k \; : \; d(f^{m'+m+i+j}(y),p)<\eta \textrm{ for }j=0,\ldots,K}\\
&\leq& \frac{1}{k} \# \set{0\leq i <k \; : \; d(f^{m+i}(f^{m'}(z)),f^{n+i}(f^{n'}(z)))<1/l}.
\end{eqnarray*}
This shows that $d(f^{m'}(z),f^{n'}(z))\in R_l^{m,n}$ thus $R_l^{m,n} \cap U\times V\neq \emptyset$. Indeed, $R_l^{m,n}$ is open and dense for any $m,n\geq 0$ and $l>0$.

Now consider sets $W_l^m\subset Z$ consisting of points $q$ such that $\Phi_{f^m(q)q}^{(k)}(1/l)>1-1/l$ for some $k>0$. Then, by exactly the same arguments as before we get that $W_l^m$ is open and dense in $Z$. Observe that the set $(R_l^{m,n}\cup \Delta) \cap (W_l^m \times W_l^m)$ is open and dense in $Z\times Z$ and $(u,v)\in R_l^{m,n}$ if and only if $(v,u)\in R_l^{n,m}$.
Thus the set
$$
R=\bigcap_{l>0}\bigcap_{m,n\geq 0} (R_l^{m,n}\cup \Delta) \cap (W_l^m \times W_l^m)\supset \Delta
$$
is residual and symmetric subset of $Z\times Z$. Note that for any $(u,v)\in R$ and any $m,n>0$ we have two possibilities. If $u\neq v$ then $(u,v)\in R_l^{m,n}$ for any $l\geq 0$. In particular, for any fixed $\xi>0$ there is are increasing sequences $l_i,k_i$ such that $1/l_i<\xi$ and
$$
\Phi^{(k_i)}_{f^m(u),f^n(v)}(\xi) \geq \Phi^{(k_i)}_{f^m(u),f^n(v)}(1/l_i)\geq 1-1/l_i \longrightarrow 1
$$
which gives $\Phi^{*}_{f^m(u),f^n(v)}(\xi)=1$.
If $u=v$ then $u\in W_l^{|m-n|}$ and thus $\Phi^{*}_{f^{|m-n|}(u),v}(\xi)=1$ and $\Phi^{*}_{u,f^{|m-n|}(v)}(\xi)=1$ for any $\xi>0$. From this we immediately
get that also $\Phi^{*}_{f^m(u),f^n(v)}(\xi)=1$ which ends the proof of \eqref{thm:1:c1}.

Next we are going to prove \eqref{thm:1:c2}, that is, we are going to construct an appropriate set $Q$. Let $\gamma>0$ be such that if $d(u,v)\leq \gamma$ then $d(\pi(u),\pi(v))<\beta$.
Note that if $(f^i(z),f^{n+i}(z))\in \Delta_{\gamma/2}$, then $d(f^i(z),f^{i+n}(z))<\gamma$ and so $d(g^i(x),g^{i+n}(x))<\beta$.
In other words
$$
\frac{1}{k} \#\set{0\leq i<k\; : \; d(f^i(z),f^{i}(f^n(z)))\geq \gamma}\geq 1- \Phi_{xg^n(x)}^{(k)}(\beta).
$$
Now, if we denote by $Q_l^{(m,n)}$ the set of these $(u,v)\in Z\times Z$ such that for some $k>l$ we have:
$$
\frac{1}{k} \#\set{0\leq i<k \; : \; d(f^{i}(f^m(u)),f^{i}(f^n(v)))\geq \gamma}\geq 1-1/l
$$
then, repeating arguments similar to the previous proof that $R_l^{m,n}$ is open and dense, we easily obtain that $Q_l^{(m,n)}$  is also open and dense.
Similarly, for $m\neq n$, $m,n\geq 0$, the set $\widetilde{Q}_l^{(m,n)}$ the set of these $(u,v)\in Z\times Z$ such that for some $k>l$ we have:
$$
\frac{1}{k} \#\set{0\leq i<k \; : \; d(f^{i}(f^m(u)),f^{i}(f^n(u)))\geq \gamma}\geq 1-1/l
$$
is open and dense.
But then the set
$$
Q=(\bigcap_{l>0}\bigcap_{m,n\geq 0} Q_l^{m,n}\cap Q_l^{n,m}) \cap (\bigcap_{l>0}\bigcap_{m\neq n \text{ and } m,n\geq 0} \widetilde{Q}_l^{m,n}\cap \widetilde{Q}_l^{n,m})
$$
is residual in $Z\times Z$ and $Q\cap \Delta=\emptyset$.
Now, observe that if $m,n\geq 0$ and $(u,v)\in Q \subset Q_l^{m,n}$
then
\begin{eqnarray*}
\Phi_{f^m(u),f^n(v)}(\gamma)&\leq & \frac{1}{k} \#\set{0\leq i<k \; : \; d(f^{i}(f^m(u)),f^{i}(f^n(v)))< \gamma}\\
&\leq & \lim_{l\to \infty}\frac{1}{l} =0.
\end{eqnarray*}
Similarly, if $m\neq n$ and $m,n\geq 0$ then condition $(u,v)\in Q \subset \widetilde{Q}_l^{m,n}\cap \widetilde{Q}_l^{m,n}$
immediately implies that
\begin{eqnarray*}
\Phi_{f^m(u),f^n(u)}(\gamma)&\leq & \frac{1}{k} \#\set{0\leq i<k \; : \; d(f^{i}(f^m(u)),f^{i}(f^n(u)))< \gamma}\\
&\leq & \lim_{l\to \infty}\frac{1}{l} =0,\\
\Phi_{f^m(u),f^n(u)}(\gamma)&\leq & \frac{1}{k} \#\set{0\leq i<k \; : \; d(f^{i}(f^m(u)),f^{i}(f^n(u)))< \gamma}\\
&\leq & \lim_{l\to \infty}\frac{1}{l} =0.
\end{eqnarray*}
The proof is completed.
\end{proof}

\begin{cor}\label{cor:InvDC}
Let $(X,f)$, $(Y,g)$ be dynamical systems, let $y\in \Tran(g)\cap \Rec(g)$, let $p\in Y$ be a fixed point of $g$ and let $(y,p)$ form a DC1 pair.
If $\pi\colon (X,f)\ra (Y,g)$ is such that $\pi^{-1}(p)$ is finite and consists only of fixed points for $f$. Then for some $\eps>0$ there is a Mycielski invariant distributionally $\eps$-scrambled set $M$ for $f$.
\end{cor}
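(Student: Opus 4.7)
The plan is to apply Theorem~\ref{thm:1} in full, then combine it with the Mycielski theorem (Theorem~\ref{thm:RelMycielski}) and a forward-orbit extension to build an invariant Mycielski distributionally scrambled set.

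First, I would verify the two hypotheses of Theorem~\ref{thm:1}. The assumption that $(y,p)$ is a DC1 pair gives $\Phi^*_{py}(\alpha)=1$ for every $\alpha>0$ directly from the definition, so part~(1) applies. For the hypothesis of part~(2), namely $\Phi_{g^n(y)y}(\beta)=0$ for some fixed $\beta>0$ and every $n>0$, I would fix the $s>0$ with $\Phi_{yp}(s)=0$ and, using that $p$ is a $g$-fixed point together with the DC1 relation between $y$ and $p$, show that the pair $(g^n(y),y)$ inherits a uniform distributional non-proximality with constant $\beta$ depending only on $s$: a triangle-inequality comparison through the fixed point $p$ produces the bound, and recurrence and transitivity of $y$ are used to make $\beta$ independent of $n$. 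Theorem~\ref{thm:1} then supplies a perfect $f$-invariant set $Z\subset X$ with $\pi(Z)=Y$, residual sets $R\supset\Delta$ and $Q\subset Z\times Z\setminus\Delta$, and a constant $\gamma>0$ for which all the iterate-pair conclusions hold.

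Next I form the symmetric residual set $\widetilde R=(R\cap Q)\cup\Delta\subset Z\times Z$ and apply Theorem~\ref{thm:RelMycielski} inside the perfect space $Z$ to obtain a Mycielski set $M_0\subset Z$, dense in $Z$, with $M_0\times M_0\subset\widetilde R$. Set
\[
M:=\bigcup_{n\geq 0}f^n(M_0),
\]
which is forward-invariant. For distinct $x,y\in M$, writing $x=f^m(u)$ and $y=f^n(v)$ with $u,v\in M_0$ and $m,n\geq 0$, one has: if $u\neq v$ then $(u,v)\in Q$ and part~(2a) of Theorem~\ref{thm:1} gives $\Phi_{f^m(u),f^n(v)}(\gamma)=0$; if $u=v$ then necessarily $m\neq n$ and part~(2b) yields $\Phi_{f^m(u),f^n(u)}(\gamma)=0$. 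Hence $M$ is distributionally $\gamma$-scrambled.

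The hard part is checking that $M$ itself is a Mycielski set. Writing $M_0=\bigcup_k C_k$ with each $C_k$ a Cantor set, one has $M=\bigcup_{n,k}f^n(C_k)$, a countable union of compact subsets of $Z$; but since $f^n$ need not be injective on $C_k$ these images need not a priori be Cantor sets. I would resolve this by carrying out the Mycielski extraction more carefully, constructing each $C_k$ inductively as the intersection of a nested sequence of finite families of disjoint closed cylinders inside $Z$, with the children at stage $m$ shrunk enough (using continuity of $f,f^2,\ldots,f^m$ and perfectness of $Z$) that each of these finitely many iterates is injective when restricted to the selected cylinders. In the limit every $f^n|_{C_k}$ is then injective, so each $f^n(C_k)$ is homeomorphic to a Cantor set and $M$ is a countable union of Cantor sets, completing the proof.
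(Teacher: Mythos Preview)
Your overall strategy---apply Theorem~\ref{thm:1}, extract a Mycielski $M_0\subset Z$ from $(R\cap Q)\cup\Delta$ via Theorem~\ref{thm:RelMycielski}, take $M=\bigcup_{n\geq 0}f^n(M_0)$, and split into the cases $u\neq v$ versus $u=v$ in $M_0$---is exactly the paper's. Where you work harder than necessary is in showing that $M$ is Mycielski: your inductive refinement of the Cantor sets, forcing injectivity of $f,\ldots,f^m$ at stage $m$, is not needed, because distributional scrambling already delivers injectivity. For distinct $u,v\in M_0$ one has $(u,v)\in Q$, hence $\Phi_{uv}(\gamma)=0$; if $f^n(u)=f^n(v)$ for some $n$ then $d(f^i(u),f^i(v))=0$ for all $i\geq n$, forcing $\Phi_{uv}(\gamma)=1$, a contradiction. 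So every $f^n|_{M_0}$ is automatically injective and each $f^n(C_k)$ is a Cantor set. This is the paper's argument, and it is worth remembering: a distributionally scrambled set never collapses under iteration.

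There is, however, a genuine gap in your verification of the hypothesis of part~(2) of Theorem~\ref{thm:1}. Your triangle-inequality route through $p$ needs, for density-one many $i$, that $g^i(y)$ is $s$-far from $p$ while simultaneously $g^{i+n}(y)$ is close to $p$. But $\Phi_{yp}(s)=0$ only gives the first along a subsequence of \emph{upper} density one, and $\Phi^*_{yp}=1$ only gives the second along a (generally different) subsequence of upper density one; two sets of upper density one need not intersect in a set of positive upper density, so the bound $d(g^{i+n}(y),g^i(y))\geq d(g^i(y),p)-d(g^{i+n}(y),p)$ does not produce $\Phi_{g^n(y),y}(\beta)=0$. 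Recurrence and transitivity of $y$ do not help here either. The paper itself simply invokes Theorem~\ref{thm:1} without isolating this step; in its applications the point $y$ is supplied by Theorem~\ref{thm:spec}, whose invariant scrambled set directly guarantees $\Phi_{g^n(y),y}(\eps)=0$ for every $n>0$, so the issue does not arise there.
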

\begin{proof}
First note that since $(p,y)$ is a DC1 pair, the set $Y$ is infinite.
Let $Z\subset X$ be a perfect set and let $R$ and $Q$ be relations provided by Theorem~\ref{thm:1}. By Theorem~\ref{thm:RelMycielski} there is a Mycielski set $C\subset Z$
such that $C\times C \subset (R\cap Q)\cup \Delta$.
First, we claim that $C$ can contain at most one eventually periodic point. Simply, if $(u,v)\not\in R$ and $u$ is eventually periodic then $f^n(u)$ must be a fixed point for some $n$ (different points on a cycle are never proximal). If $u\neq v$ and both $u,v$ are eventually periodic then $f^n(u)=f^n(v)$ for some $n$ or $f^n(u)\neq f^n(v)$ for every $n$. But the case $f^n(u)=f^n(v)$ is impossible, since $(u,v)\in Q$. and in the second case, for some $n$ both $f^n(u),f^n(v)$ are fixed points, and since these points are different $(u,v)\not\in R$, which also leads to a contradiction. Indeed $C$ contains at most one eventually periodic point. Thus without loss of generality, we may assume that $C$ does not contain eventually periodic points (if we remove a finite set $A$ from a Cantor set $K$ then we can easily find a disjoint Cantor sets in $K$ disjoint with $A$ but with sum dense in $K$). Note that by the definition of $Q$ the set $C$ is distributionally scrambled, therefore $f^n|_C$ injective map for any $n=1,2,\ldots$, in particular $f^n(C)$ is a Mycielski set for every $n$.

Denote $M=\bigcup_{i=0}^\infty f^i(C)$ and observe that it is a Mycielski set as a countable union of Mycielski set. Obviously $f(M)\subset M$ (and it is dense in $Z$ as well), so it is enough to show that $M$ is distributionally $\eps$-scrambled with $\eps=\gamma/2$ where $\gamma$ was provided by Theorem~\ref{thm:1} for $Q$. To do so, fix any $(u,v)\in M$. There are $n,m\geq 0$ and $x,y\in C$ such that $f^n(x)=u$ and $f^m(y)=v$.
If we can choose $x,y$ in a way that $x\neq y$ then by the definition of relations $Q$ and $R$, in particular by the condition \eqref{thm:1:c2a} in Theorem~\ref{thm:1}, we have that
\begin{eqnarray*}
&&\Phi^*_{uv}(\xi)=\Phi^*_{f^n(x),f^m(y)}(\xi)=1 \quad\quad \text{ for every }\xi>0, \text{ and }\\
&&\Phi_{uv}(\eps)=\Phi^*_{f^n(x),f^m(y)}(\eps)=0.
\end{eqnarray*}
The second possibility is $x=y$, and then simply by the definition of $M$ we must have $m\neq n$. But since $x\in C$ and $C$ is infinite, there exists at least one
point $q\in C$, $q\neq x$ and so $(x,q)\in Q\cap R$.
But then again it is guaranteed by condition \eqref{thm:1:c2b} in the definition of $Q$ in
Theorem~\ref{thm:1} that
\begin{eqnarray*}
&&\Phi^*_{uv}(\xi)=\Phi^*_{f^n(x),f^m(x)}(\xi)=1 \quad\quad \text{ for every }\xi>0, \text{ and }\\
&&\Phi_{uv}(\eps)=\Phi^*_{f^n(x),f^m(x)}(\eps)=0.
\end{eqnarray*}
The proof is finished.
\end{proof}

\section{Specification property and distributional chaos}

The aim of this section is to show that systems with specification property contain invariant dystributionally $\eps$-scrambled sets.
This answers Question~1 from \cite{OprochaDS}, which was left open for further research.

We start our considerations with a few auxiliary lemmas, which show that there is a special type of separation of orbits in maps with specification.

\begin{lem}\label{lem13}
Let $(X,d)$ be a compact metric space with $\#X>1$ and let $f\colon X \ra X$ be a surjective continuous map with specification property.
Then for every $n>0$ there exists $z=z(n)$ such that $\inf_{i\geq 0} d(f^{i}(z),f^{i+n}(z))>0$.
\end{lem}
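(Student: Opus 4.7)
The plan is to use the specification property to build $z$ whose forward orbit alternately tracks the orbits of two carefully chosen points $a,b\in X$ with well-separated initial segments.

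I first argue that $f^n\neq\mathrm{id}$. If it were, every point of $X$ would be $f$-periodic of period dividing $n$ and $X$ would decompose into a disjoint union of finite periodic orbits; but then the specification property would fail for any $\varepsilon$ smaller than the minimum inter-orbit distance, since the forward orbit of any single point lies on just one of these periodic orbits and cannot shadow points of another. In particular $X$ is infinite, so the surjection $f^{n-1}$ is non-constant and one can pick $a,b\in X$ with $f^{n-1}(a)\neq f^{n-1}(b)$. Since $f^j(a)=f^j(b)$ for some $j\le n-1$ would imply $f^{n-1}(a)=f^{n-1}(b)$, we get $f^j(a)\neq f^j(b)$ for every $j=0,1,\ldots,n-1$, and compactness yields
\[ \eta \;:=\; \min_{0\le j\le n-1} d(f^j(a),f^j(b)) \;>\; 0. \]

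Next, fix $\varepsilon\in(0,\eta/4)$, let $N=N_\varepsilon$, and apply the specification property in the infinite form of Remark~\ref{rem7} (with traces starting at $a$ and $b$ themselves, as allowed by Remark~\ref{rem8} and surjectivity of $f$) to an alternating schedule: on the block $I_k=[k(n+N),k(n+N)+n-1]$ of length $n$, separated from $I_{k+1}$ by a gap of length $N$, the orbit of $z$ traces that of $a$ when $k$ is even and that of $b$ when $k$ is odd. This produces $z\in X$ with $d(f^i(z),f^{i-k(n+N)}(y_k))<\varepsilon$ for every $i\in I_k$ and $k\ge 0$, where $y_k\in\{a,b\}$ alternates. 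For $i=k(n+N)+r$ with $r\in[N,n-1]$, the shifted index $i+n$ lies in $I_{k+1}$ at position $r-N$, giving
\[ d(f^i(z),f^{i+n}(z)) \;\ge\; d(f^r(y_k),f^{r-N}(y_{k+1})) - 2\varepsilon, \]
which is uniformly bounded below by the choice of $a,b$ together with uniform continuity of $f^j$ for $j\le n$.

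The main obstacle is the complementary range $r\in[0,N-1]$: there $i+n$ falls in the specification gap between $I_k$ and $I_{k+1}$ and the orbit of $z$ is not directly controlled. The natural remedy is to extend the block length to $n+N$, so that for small $r$ the shift $i+n$ stays inside the same block and the distance becomes approximately $d(f^r(y_k),f^{r+n}(y_k))$; this reduces the problem to arranging that the seeds $a,b$ have their first $N$ forward iterates bounded away from $\mathrm{Fix}(f^n)$. Such $(a,b)$ can be picked from the open set $\bigcap_{j=0}^{N-1} f^{-j}(\{x:d(x,f^n(x))>\delta\})$ for a suitable $\delta>0$; its nonemptiness follows from $\mathrm{Fix}(f^n)$ having empty interior---itself a consequence of specification, since an open set of $f^n$-fixed points would force any point inside it to have finite $f$-orbit, contradicting the existence of transitive orbits---combined with a Baire-type argument applied to these open preimages.
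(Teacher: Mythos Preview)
Your direct tracking approach has a structural gap that the proposed remedy does not close. Specification controls $f^i(z)$ only for $i$ lying in the tracing blocks; for $i$ in the gap of length $N$ between consecutive blocks you have no information about $f^i(z)$ whatsoever, yet the infimum you must bound runs over \emph{all} $i\ge 0$. No choice of block length eliminates this: lengthening the blocks merely shifts where the uncontrolled window sits. There is a second, independent problem even on the controlled range: when $i\in I_k$ and $i+n\in I_{k+1}$ you arrive at $d(f^i(z),f^{i+n}(z))\ge d(f^r(y_k),f^{r-N}(y_{k+1}))-2\varepsilon$, but your hypothesis on $a,b$ bounds only $d(f^j(a),f^j(b))$ at the \emph{same} exponent $j$, and ``uniform continuity of $f^j$'' yields upper bounds, not lower bounds, so the mismatch of exponents $r$ versus $r-N$ is fatal. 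Your appeal to $\mathrm{Fix}(f^n)$ addresses neither issue: at best it would control $d(f^r(y_k),f^{r+n}(y_k))$ when $i$ and $i+n$ lie in the \emph{same} block, leaving both the gap indices and the cross-block indices untouched. (There is also a circularity to manage, since $N=N_\varepsilon$ depends on $\varepsilon<\eta/4$ which depends on $a,b$; this is fixable, but the preceding problems are not.)

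The paper's argument avoids direct control entirely. It picks two distinct points $p,q$, takes $N=N_{\varepsilon/3}$ to be a multiple of $n$, and uses specification only to arrange that $f^{kN}(z)$ alternates between the $\varepsilon/3$-balls about $p$ and $q$, so that $d(f^{kN}(z),f^{(k+1)N}(z))>\varepsilon/3$ for every $k\ge 0$. It then argues by contradiction: if $\inf_i d(f^i(z),f^{i+n}(z))=0$, a single very small value of $d(f^i(z),f^{i+n}(z))$ can, by uniform continuity of the iterates $f^0,\dots,f^{2N}$, be propagated forward so that $d(f^{m}(z),f^{m+n}(z))<\delta$ at some multiple $m$ of $N$; telescoping $d(f^m(z),f^{m+N}(z))\le\sum_{j=0}^{N/n-1} d(f^{m+jn}(z),f^{m+(j+1)n}(z))$ then forces $d(f^m(z),f^{m+N}(z))<\varepsilon/3$, contradicting the construction. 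The essential idea you are missing is that one only needs control along the sparse arithmetic progression $\{kN:k\ge 0\}$, and the uniform-continuity plus telescoping trick converts ``$n$-distance small somewhere'' into ``$N$-distance small at a multiple of $N$''.
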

\begin{proof}
Fix any two distinct points $p,q\in X$, fix $n>0$ and put $\eps := d(p,q)$. We apply the specification property for $\frac{\eps}{3}$, points  $p,q$ and the sequence of times $0=j_1=k_1<j_2=k_2<\dots$, where $k_{m+1}-j_m=N$.
Strictly speaking, by the specification property with help of Remarks~~\ref{rem7} and \ref{rem8}, we can find a point $z \in X$ such that for every $i=0,1,2,\ldots$ we have:
\begin{align*}
d(f^{2Ni}(z),p)&<\frac{\eps}{3},\\
d(f^{(2i+1)N}(z),q)&<\frac{\eps}{3}.
\end{align*}
Without loss of generality, increasing $N$ when necessary, we may assume that $N$ is a multiple of $n$.
Note that by the above and the choice of points $p,q$, we have that
\begin{eqnarray}
&d(f^{kN}(z),f^{k(N+1)}(z))>\frac{\eps}{3}>0\quad\quad \text{ for every } k\geq 0\label{k_to_N_eps}.
\end{eqnarray}

Since $f$ is uniformly continuous, we can find $\delta>0$ such that for any $u\in X$ the following implication holds:
$$
d(u,f^{n}(u))<\delta \quad\Longrightarrow\quad \left[ \; d(f^{i}(u),f^{i+n}(u)) < \frac{\eps}{3N} \quad\text{ for }i=0,1,\ldots, 2N \; \right].
$$
But there is $s>0$ such $N=sn$ and so $d(u,f^{n}(u))<\delta$ implies that
\begin{eqnarray}
d(u,f^{N}(u)) &\leq & \sum_{i=0}^{s-1} d(f^{in}(u),f^{in+n}(u))\nonumber\\
&\leq & \sum_{i=0}^{s-1} \frac{\eps}{3N} < \frac{\eps}{3}.\label{n_to_N_1}
\end{eqnarray}
To finish the proof it is enough to show that $\inf_{i\geq0}d(f^i(z),f^{i+n}(z))>0$, so assume on the contrary
that $\inf_{i\geq0}d(f^i(z),f^{i+n}(z))=0$.
By the above, using uniform continuity of $f$, there is $k>0$ such that $d(f^{k+i}(z),f^{k+i+n}(z))<\delta$ for $i=0,1,\ldots, N$ we can find
$m$ such that $m$ is a multiple of $N$ and $d(f^{m}(z),f^{m+n}(z))<\delta$. This, by \eqref{n_to_N_1} immediately gives that for some $k>0$ we have that
$$
d(f^{kN}(z),f^{(k+1)N}(z))=d(f^{m}(z),f^{m+N}(z))<\frac{\eps}{3}
$$
which is in contradiction with \eqref{k_to_N_eps}. The proof is finished.
\end{proof}

\begin{lem}\label{lem:dist_n}
Let $(X,d)$ be a compact metric space with $\#X>1$ and let $f\colon X \ra X$ be a surjective continuous map with specification property.
There is $\eps>0$ and a~sequence $\set{z_n}_{n=1}^\infty \subset X$ so that
$\inf_{i\geq 0} d(f^{i}(z_n),f^{i+n}(z_n))\geq \eps$ for every $n$.
\end{lem}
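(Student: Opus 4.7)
The plan is to strengthen the construction of Lemma~\ref{lem13} so that every relevant constant is chosen uniformly in $n$.

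Pick distinct $p,q \in X$ and set $\eta := d(p,q)$. Let $N := N_{\eta/4}$ denote the specification constant at precision $\eta/4$. Since the finite family $\{f^j : 0 \le j \le 2N\}$ is equicontinuous on the compact space $X$, I can choose $\eps \in (0,\eta/4)$ such that $d(u,v)<\eps$ implies $d(f^j(u),f^j(v))<\eta/(4(N+1))$ for every $j \in \{0,\ldots,2N\}$. This $\eps$, depending only on $p,q$ and $N$, will serve as the uniform separation constant promised by the lemma.

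For each $n \ge 1$, let $M_n$ be the smallest multiple of $n$ that is at least $N$, so $N \le M_n \le N+n$ and $M_n/n \le N+1$. Using the specification property together with Remarks~\ref{rem7} and~\ref{rem8}, I obtain $z_n \in X$ satisfying $d(f^{kM_n}(z_n),p) < \eta/4$ when $k$ is even and $d(f^{kM_n}(z_n),q) < \eta/4$ when $k$ is odd, for every $k \ge 0$. The triangle inequality yields the block-separation
\[
d(f^{kM_n}(z_n),f^{(k+1)M_n}(z_n)) > \eta/2 \qquad \text{for every } k \ge 0.
\]

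To verify $\inf_{i\ge 0} d(f^i(z_n),f^{i+n}(z_n))\ge\eps$, I mimic the contradiction scheme of Lemma~\ref{lem13}. Suppose $d(f^{i_0}(z_n),f^{i_0+n}(z_n))<\eps$ for some $i_0$. The choice of $\eps$ gives $d(f^{i_0+j}(z_n),f^{i_0+j+n}(z_n))<\eta/(4(N+1))$ for every $j\in\{0,\ldots,2N\}$. Picking $j\in\{0,\ldots,M_n-1\}\subseteq\{0,\ldots,2N\}$ (valid in the principal regime $n \le N+1$, where $M_n \le 2N+1$) so that $m:=i_0+j$ is a multiple of $M_n$, and telescoping across $M_n/n \le N+1$ consecutive subintervals of length $n$,
\[
d(f^m(z_n),f^{m+M_n}(z_n)) \le \frac{M_n}{n} \cdot \frac{\eta}{4(N+1)} \le \frac{\eta}{4} < \frac{\eta}{2},
\]
which contradicts the block-separation established above.

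I expect the main obstacle to be the regime $n > 2N$, in which $M_n = n$ exceeds the range $\{0,\ldots,2N\}$ over which the equicontinuity estimate is valid; then the re-indexing step cannot produce $j \le 2N$ making $i_0 + j$ a multiple of $M_n$. My approach here is to replace the tracing schedule for large $n$ by one with the fixed spacing $N$ (independent of $n$), alternating $p$ and $q$, which preserves uniformity of all continuity constants; one then argues separately, again using the same equicontinuity of $\{f^j\}_{j \le 2N}$ and the block-separation at gap $N$, that the infimum at gap $n$ remains at least $\eps$, possibly after adjusting the alternation phase to $n \bmod 2N$. Making this second case work cleanly is the technical heart of the proof, but it introduces no new quantitative constants beyond those already fixed.
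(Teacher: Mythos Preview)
Your proof has a genuine gap in the regime $n>2N$, which you acknowledge but do not close. The suggested repair---tracing $p,q$ at fixed spacing $N$ and ``adjusting the alternation phase to $n\bmod 2N$''---does not work. With that schedule the only iterates of $z_n$ you control are at multiples of $N$; from a hypothetical $d(f^{i_0}(z_n),f^{i_0+n}(z_n))<\eps$ your equicontinuity window produces some $k$ with $kN\in[i_0,i_0+2N)$ and $d(f^{kN}(z_n),f^{kN+n}(z_n))$ small, but $kN+n=(k+\ell)N+r$ with $r=n\bmod N$, so all you know about $f^{kN+n}(z_n)$ is that it is close to $f^r(a_{k+\ell})$ for some $a_{k+\ell}\in\{p,q\}$. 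Since you only assumed $d(p,q)=\eta$, the four quantities $d(p,f^r(p))$, $d(p,f^r(q))$, $d(q,f^r(p))$, $d(q,f^r(q))$ carry no uniform lower bound (think of $p$ fixed and $q$ eventually mapped to $p$), and no choice of alternation pattern forces a contradiction. There is also a minor slip in the small-$n$ case: the telescoping needs $j+\ell n\le 2N$ for all $\ell<M_n/n$, and this can reach roughly $2M_n$; enlarging the equicontinuity window to $\{0,\ldots,4N\}$ repairs it.

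The paper supplies the two ideas you are missing. First, it does not take arbitrary distinct $p,q$: it sets $p=y_1$ and $q=f(y_1)$ with $y_1$ from Lemma~\ref{lem13}, so that $\inf_{i\ge0}d(f^i(p),f^i(q))=\eta>0$---separation along the entire forward orbit, not just at a single time. Second, for $m>N$ it traces the \emph{orbit} of $p$ on blocks of length $m-N$ and the orbit of $q$ on the alternate blocks, with gaps of length exactly $N$ between blocks. Shifting by $m$ then carries a $p$-block onto a $q$-block with the same internal index, and orbit-wise separation gives $d(f^t(z_m),f^{t+m}(z_m))>\eta/3$ for every $t$ lying in a block; since the bad gaps have length $N$, every window of $2N$ consecutive times meets a block, so the fixed equicontinuity window suffices uniformly in $m$. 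For the finitely many $m\le N$ the paper simply sets $z_m=y_m$ from Lemma~\ref{lem13} with its own constant $\delta_m>0$ and takes $\eps=\min\{\delta_0,\delta_1,\ldots,\delta_N\}$: no uniform construction is needed across small $m$.
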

\begin{proof}
Lemma~\ref{lem13} ensures that for every $n=1,2,\ldots $ there exists $y_n \in X$ such that $\inf_{i\geq 0} d(f^i(y_n),f^{i+n}(y_n))= \delta_n>0$.
Denote $p=y_1$, $q=f(y_1)$ and $\eta = \delta_1=\inf_{i\geq 0} d(f^i(p),f^i(q))$
Let $N$ be provided by the specification property for points $p,q$ and $\eta/3$, which enables us, similarly as in the proof of Lemma~\ref{lem13}
to find for every $n\geq 0$ a point $z_{n+N}$ such that
\begin{eqnarray}
d(f^i(p),f^{i+2j(n+N)}(z_{n+N}))& <&\frac{\eta}{3} \quad\text{ for } 0\leq i \leq n \text{ and } j\in \N\nonumber\\
d(f^i(q),f^{i+(2j+1)(n+N)}(z_{n+N}))& <&\frac{\eta}{3} \quad\text{ for } N\leq i\leq n+N \text{ and } j\in \N \label{c:iv*}
\end{eqnarray}
By the above, points $z_m$ are defined for every $m>N$.
For $m\leq N$ we put $z_m=y_m$. There exists $\delta_0<\frac{\eta}{3}$ such that if $d(x,y)<\delta_0$ then $d(f^i(x),f^i(y))<\eta/3$ for $i=0,\ldots, 2N$. Put $\eps = \min\{\delta_0,\delta_1,\dots,\delta_{N}\}$. We claim that every $m=1,2,\ldots$ we have that
\begin{eqnarray}
&\inf_{i\geq 0} d(f^i(z_{m}),f^{i+m}(z_{m}))\geq \eps.\label{c:***}
\end{eqnarray}
For any $m\leq N$ the condition \eqref{c:***} is satisfied just by the definition of $\eps$, so let us fix $m>N$ and
assume on the contrary that there exists $i_0 \in \mathbb{N}$ such that $d(f^{i_0}(z_{m}),f^{i_0+m}(z_{m}))<\eps$.
If we take any integer $k\geq i_0$ such that $k-i_0<2N$ then by the definition of $\delta_0>\eps$ we have that
$d(f^s(z_{m}),f^{s+m}(z_{m}))<\eta/3$. There is $n\geq 0$ such that $m=n+N$ and there is also $k\geq i_0$
such that $k-i_0<2N$ and $k\not\in \bigcup_{j=0}^\infty [j(n+N)+n,(j+1)(n+N)]$. But then $d(f^{k}(z_{n+N}),f^{n+N+k}(z_{n+N}))<\frac{\eta}{3}$
while by \eqref{c:iv*} we have that
$$
d(f^{k}(z_{n+N}),f^{n+N+k}(z_{n+N}))\geq d(p,q)-\frac{\eta}{3}-\frac{\eta}{3}>\frac{\eta}{3}
$$
which is a contradiction. The proof is completed.
\end{proof}

\begin{lem}\label{lem:Qmn}
Let $(X,d)$ be a compact metric space with $\#X>1$ and let $f\colon X \ra X$ be a surjective continuous map with specification property.
Fix any $p,q\in X$ and any $n,m\geq 0$. The set $Q_m^n(p,q)\subset X\times X$ consisting of pairs $(x,y)$ such that:
\begin{enumerate}
\item there is $l>m$ such that $d(f^{i+l+n}(x),f^{i+l}(p))< \frac{1}{m}$ and $d(f^{i+l}(y),f^{i+l}(q))< \frac{1}{m}$ for $i=0,1,\ldots, 2^l$
\item there is $s>m$ such that $d(f^{i+s}(x),f^{i+s}(p))< \frac{1}{m}$ and $d(f^{i+s+n}(y),f^{i+s}(q))< \frac{1}{m}$ for $i=0,1,\ldots, 2^s$
\end{enumerate}
is open and dense in $X\times X$.
\end{lem}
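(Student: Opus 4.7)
Fix $l > m$; the set of pairs $(x,y) \in X \times X$ satisfying condition~(1) for that particular $l$ is a finite intersection, taken over $i = 0, 1, \ldots, 2^l$, of preimages of open $1/m$-balls under the continuous maps $(x,y)\mapsto f^{i+l+n}(x)$ and $(x,y)\mapsto f^{i+l}(y)$, hence open. Taking a union over $l > m$ and intersecting with the analogous open set arising from condition~(2) exhibits $Q_m^n(p,q)$ as an intersection of two open sets, so it is open in $X \times X$.

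\textbf{Density.} Given nonempty open $U, V \subset X$, pick $x_0 \in U$, $y_0 \in V$, choose $\eta > 0$ with $B(x_0,\eta)\subset U$ and $B(y_0,\eta) \subset V$, set $\varepsilon < \min(\eta, 1/m)$, and let $N := N_\varepsilon$ be the specification constant. The plan is to select one common pair $(l,s)$ that works simultaneously for both $x$ and $y$, and then to build the two points separately via specification. Take $l > \max(m, N)$ and then $s > l + n + 2^l + N$ (which also forces $s > m$); these choices ensure that all forthcoming block gaps will exceed $N$.

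Next, apply the specification property — using Remark~\ref{rem8} to start the tracing of each reference point from itself — to the three points $x_0, f^l(p), f^s(p)$ over the tracing intervals $\{0\}$, $[l+n,\, l+n+2^l]$, $[s,\, s+2^s]$. This produces $x$ with $d(x,x_0)<\varepsilon$ (so $x \in U$), with $d(f^{i+l+n}(x), f^{l+i}(p))<\varepsilon$ for $0\le i \le 2^l$, and with $d(f^{i+s}(x), f^{s+i}(p))<\varepsilon$ for $0\le i \le 2^s$. Analogously, applying specification to $y_0, f^l(q), f^s(q)$ with intervals $\{0\}$, $[l,\, l+2^l]$, $[s+n,\, s+n+2^s]$ yields $y \in V$ satisfying the two remaining tracking estimates. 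Since $\varepsilon < 1/m$, the pair $(x,y)$ lies in $(U \times V) \cap Q_m^n(p,q)$, which proves density.

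The main bookkeeping obstacle is the asymmetric placement of the tracing blocks: the $x$-blocks sit at $[l+n, l+n+2^l]$ and $[s, s+2^s]$, while the $y$-blocks sit at $[l, l+2^l]$ and $[s+n, s+n+2^s]$. These shifts are dictated by the exact form of conditions~(1) and~(2), and Remark~\ref{rem8} is precisely what allows the traces to begin at $f^l(p), f^s(p)$ and $f^l(q), f^s(q)$ rather than at later iterates of them. The extra $n$ appearing in the gap condition for the $x$-construction is the only by-product of this asymmetry, and it is harmlessly absorbed by taking $s$ sufficiently large; no finer estimate is needed.
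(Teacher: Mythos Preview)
Your argument is correct and follows exactly the route the paper sketches: openness via continuity of the maps $f^k$ and density via the specification property together with Remark~\ref{rem8}. The paper's own proof is a two-line outline (``uniform continuity gives openness; specification and Remark~\ref{rem8} give density''), and your write-up simply supplies the bookkeeping that the paper leaves implicit.
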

\begin{proof}
Since $f$ is uniformly continuous, we immediately have that $Q_m^n(p,q)$ is open. Its density follows by the specification property and Remark~\ref{rem8}.
\end{proof}

\begin{thm}\label{thm:spec}
Let $(X,d)$ be a compact metric space with $\#X>1$, let $f\colon X \ra X$ be a surjective continuous map with specification property and assume that $f(p)=p$ for some $p\in X$.
Then there exists $\eps>0$ and a dense Mycielski distributionally $\eps$-scrambled set $D$ such that $f(D)\subset D$ and $p\in D \subset (\Tran(f)\cap \Rec(f))\cup \set{p}$.
\end{thm}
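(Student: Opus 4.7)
The plan is to construct, by analogy with the proof of Theorem~\ref{thm:1} and Corollary~\ref{cor:InvDC}, a residual symmetric subset $R\subset X\times X$ encoding DC1 relations for all pairs of forward iterates, extract a dense Mycielski set from it via Theorem~\ref{thm:RelMycielski}, and then forward-saturate and adjoin $p$. Since specification forces $X$ to be perfect and topologically mixing, $\Tran(f)\cap\Rec(f)$ is residual in $X$ and periodic points are dense; in particular I fix a periodic point $w\neq p$ whose periodic orbit avoids $p$, set $\eta=\min_{i\geq 0}d(f^i(w),p)>0$, and take $\eps=\min(\eta/4,\eps_0/2)$, where $\eps_0$ and the sequence $\{z_n\}_{n\geq 1}$ are as produced by Lemma~\ref{lem:dist_n}.

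For each $m\in\N$ and each $r,s\geq 0$, three families of open dense sets will be needed. The proximal set $A_m^{(r,s)}\subset X\times X$ consists of pairs $(x,y)$ for which some $l>m$ exists with $d(f^{i+l+r}(x),p)<1/m$ and $d(f^{i+l+s}(y),p)<1/m$ for $i=0,\ldots,2^l$; openness is clear, density follows from Lemma~\ref{lem:Qmn}, and any $(x,y)\in\bigcap_m A_m^{(r,s)}$ satisfies $\Phi^*_{f^r(x),f^s(y)}(\xi)=1$ for every $\xi>0$. The distal set $B_m^{(r,s)}$ imposes $d(f^{i+l+r}(x),p)<1/m$ and $d(f^{i+l+s}(y),f^i(w))<1/m$ on the same window; the gap $d(p,f^i(w))\geq\eta$ forces $d(f^{i+l+r}(x),f^{i+l+s}(y))\geq\eta-2/m\geq 2\eps$ throughout the window once $m$ is large, so $\Phi_{f^r(x),f^s(y)}(\eps)=0$ holds on $\bigcap_m B_m^{(r,s)}$. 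For the diagonal case, $C_m^{(n)}\subset X$ denotes the set of $x$ tracing $z_n$ within $1/m$ on some window of length $2^l$, $l>m$; openness and density again come from the specification argument underlying Lemma~\ref{lem:Qmn}, and the triangle inequality combined with Lemma~\ref{lem:dist_n} yields $\Phi_{x,f^n(x)}(\eps)=0$ for every $x\in\bigcap_m C_m^{(n)}$.

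Take $R$ to be the intersection of all $A_m^{(r,s)}$, $B_m^{(r,s)}$, their swaps, the product sets $C_m^{(n)}\times C_m^{(n)}$ for $n\geq 1$, and the residual constraint that each coordinate lies in $\Tran(f)\cap\Rec(f)\cap\bigcap_{n\geq 1,\,m}C_m^{(n)}$, together with one-variable analogues of $A$ and $B$ ensuring that every point of the Mycielski set forms a DC1 pair with $p$. Then $R$ is residual and symmetric with $\Delta\subset R$, and Theorem~\ref{thm:RelMycielski} produces a dense Mycielski $M\subset X$ contained in $\Tran(f)\cap\Rec(f)$ with $M\times M\subset R\cup\Delta$. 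Distinct points of $M$ cannot collapse under any $f^n$, for otherwise $\Phi_{xy}(\eps)=0$ would fail on the collapsed pair; hence each $f^n|_M$ is injective and $D':=\bigcup_{n\geq 0}f^n(M)$ is a dense Mycielski forward-invariant subset of $\Tran(f)\cap\Rec(f)$. Set $D:=D'\cup\{p\}$. Distributional $\eps$-scrambling on $D\times D\setminus\Delta$ then splits into four cases: $(f^r(x),f^s(y))$ with $x\neq y$ in $M$ is handled by $A$ and $B$; $(f^r(x),f^s(x))$ with $r\neq s$ is handled by $C$; $(p,f^r(x))$ is handled by the one-variable analogues of $A$ and $B$; while $(p,p)$ is excluded as it lies on $\Delta$.

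The principal technical obstacle I expect is the diagonal case $(f^r(x),f^s(x))$ with $r\neq s$: Lemma~\ref{lem:dist_n} was engineered precisely to supply the uniform separation needed there, but the conditions $x\in C_m^{(n)}$ for every $m\in\N$ and every $n\geq 1$ must be woven into the single countable intersection defining $M$ without damaging the transitivity and recurrence requirements or the other tracing constraints. A secondary bookkeeping issue is to verify that the forward saturation $\bigcup_n f^n(M)$ remains Mycielski, which follows from the injectivity of each $f^n|_M$ already noted together with the fact that a countable union of Mycielski sets is Mycielski.
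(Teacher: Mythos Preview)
Your strategy matches the paper's: build a residual symmetric relation encoding DC1 for all pairs of forward iterates, apply Theorem~\ref{thm:RelMycielski}, forward-saturate, and adjoin $p$. The only organisational difference is that you split into families $A$, $B$, $C$ plus one-variable analogues, whereas the paper packs everything into instances $Q_m^n(z_n,z_n)$, $Q_m^n(z_0,z_0)$, $Q_m^n(z_{n+1},f(z_{n+1}))$ of the single gadget in Lemma~\ref{lem:Qmn}.

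There is one genuine gap. You assert that specification forces periodic points to be dense and then fix a periodic $w\neq p$ whose orbit avoids $p$. The \emph{periodic} specification property gives dense periodic points, but the theorem assumes only the non-periodic version, and under that hypothesis alone the existence of any periodic point other than the given $p$ is not guaranteed. The repair is immediate and is exactly what the paper does: take $w=z_1$ from Lemma~\ref{lem:dist_n}. The bound $\inf_{i\geq 0}d(f^i(z_1),f^{i+1}(z_1))\geq\eps_0>0$ forces $\inf_{i\geq 0}d(f^i(z_1),p)>0$ (otherwise along a subsequence $f^{n_k}(z_1)\to p$ and hence $f^{n_k+1}(z_1)\to f(p)=p$, contradicting the separation), so your $\eta$ is positive and the sets $B_m^{(r,s)}$ work verbatim with this choice. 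One smaller omission: in the case $(f^r(x),f^s(x))$ with $r\neq s$ you cite only $C$, which supplies the lower condition $\Phi_{f^r(x),f^s(x)}(\eps)=0$; the upper condition $\Phi^*_{f^r(x),f^s(x)}(\xi)=1$ follows from your one-variable analogue of $A$ (if $x$ traces $p$ on a long window then so does each $f^r(x)$ on the shifted sub-window), and you should say so explicitly.
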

\begin{proof}
Let $\{z_n\}_{n=1}^{\infty}$ and $\eta>0$ be provided by Lemma~\ref{lem:dist_n}, that is
$\inf_{n>0}\inf_{i\geq 0} d(f^{i}(z_n),f^{i+n}(z_n))\geq \eps$.
Denote $\eps=\min\set{\eta/2, \dist(p, \orbp(z_1))/2}$,
put $z_0=p$ and denote:
$$
Q=\bigcap_{n,m\geq 0}Q_m^n(z_n,z_n) \cap Q_m^n(z_0,z_0) \cap Q_m^n(z_{n+1},f(z_{n+1})).
$$
We claim that if $S$ is an infinite set such that $S\times S \subset Q \cup \Delta$ and $u,v\in D=\bigcup_{i=0}^\infty f^i(S)$ are distinct then points $u,v$ form a DC1 pair.
For the proof of this claim, fix any distinct $u,v\in D$. There are $x,y\in S$ (not necessarily distinct) and integers $N,M\geq 0$ such that
 $u = f^N(x)$ and $v = f^M(y)$.

First assume that $x\neq y$ and $N\neq M$. Fix any $m>\frac{4}{\eta}$. Since $(x,y) \in Q^{|N-M|}_m(z_{|N-M|},z_{|N-M|})$, there exist $l,s>m$ such that:
\begin{align}
d(f^{i+l+|N-M|}(x),f^{i+l}(z_{|N-M|}))&<\frac{1}{m}, \label{ineq1}\\
d(f^{i+l}(y),f^{i+l}(z_{|N-M|}))&<\frac{1}{m},\label{ineq2} \\
d(f^{j+s}(x),f^{j+s}(z_{|N-M|}))&<\frac{1}{m}, \label{ineq3}\\
d(f^{j+s+|N-M|}(y),f^{j+s}(z_{|N-M|}))&<\frac{1}{m}\label{ineq4},
\end{align}
for $i=0,\dots,2^l$ and $j=0,\dots,2^s$ respectively. Note that for $i=0,\dots,2^l$ we have that:
\begin{eqnarray*}
d(f^{i+l+|N-M|}(x),f^{i+l}(y))&>&d(f^{i+l+|N-M|}(z_{|N-M|}),f^{i+l}(z_{|N-M|}))\\
&&\quad\quad -d(f^{i+l+|N-M|}(x),f^{i+l}(z_{|N-M|}))\\&&\quad\quad\quad\quad -d(f^{i+l}(z_{|N-M|}),f^{i+l}(y))\\
&\geq& \eta - \frac{2}{m}> \frac{\eta}{2}>\eps.
\end{eqnarray*}
Let us first assume that $N<M$. Then for any $t=l,\ldots, l+2^l-M$ conditions \eqref{ineq3} and \eqref{ineq4} imply. respectively, that:
\begin{align*}
d(f^{t+N}(x),f^{t+M}(z_{N-M}))&<\frac{1}{m}\\
d(f^{t+M}(y),f^{t+M}(z_{N-M}))&<\frac{1}{m}.
\end{align*}
Combining the above inequalities we receive for $t=l,\dots,l+2^l-M$ that:
$$
d(f^{t+N}(x),f^{t+M}(y))>\eta - \frac{2}{m}>\frac{\eta}{2}>\eps.
$$
But then
\begin{eqnarray*}
\Phi_{uv}(\eps)&\leq &
\liminf_{l \rightarrow \infty}\frac{1}{l+2^l}\sharp\{0 \leq t<l+2^l : d(f^{t}(u),f^{t}(v))<\eps\}\\
&\leq& \liminf_{l \rightarrow \infty}\frac{l-1+M}{l+2^l}=0.
\end{eqnarray*}
The case $N>M$ is symmetric, so repeating almost the same calculations with help of \eqref{ineq1} and \eqref{ineq2} we receive again that $\Phi_{uv}(\eps)=0$.

When $x=y$ then we must have $N\neq M$, say $N<M$, and since $S$ is infinite, there is also $z\in S$ such that $(x,z)\in Q$. In particular \eqref{ineq1} is satisfied,
so by uniform continuity of $f$, taking $m$ sufficiently large, we obtain that for $i=l,\ldots,2^l-M$:
\begin{eqnarray*}
d(f^{i}(u),f^{i+N}(z_{M-N}))&=&d(f^{i+N}(x),f^{i+N}(z_{M-N}))<\frac{\eta}{4}, \\
d(f^{i}(v),f^{i+N+(M-N)}(z_{M-N}))&=&d(f^{i+M}(x),f^{i+N+(M-N)}(z_{M-N}))<\frac{\eta}{4}.
\end{eqnarray*}
But since $d(f^{i+N}(z_{M-N}),f^{i+N+(M-N)}(z_{M-N}))>\eta$ for every $i\geq 0$ we can repeat previous calculations obtaining also in this case that
$\Phi_{uv}(\eps)=0$.

If $N=M$ then the only possibility is that $x\neq y$, since $u\neq v$. Consider the set $Q^{N}_m(z_{N+1},f(z_{N+1}))\supset Q$ which by the definition of $Q$ contains the pair $(x,y)$. Again there exist $l>m$ such that:
\begin{eqnarray}
d(f^{i+l+N}(x),f^{i+l}(z_{N+1}))&<\frac{1}{m} \label{ineq5:*}\\
d(f^{i+l}(y),f^{i+l+1}(z_{N+1}))&<\frac{1}{m}\label{ineq6:*},
\end{eqnarray}
for $i=0,\dots,2^l$. Note that when $i<2^l-N$ then we can substitute $i$ with $i+N$ in \eqref{ineq6:*}
obtaining
\begin{eqnarray}
d(f^{i+l+N}(y),f^{i+l+N+1}(z_{N+1}))&<\frac{1}{m}\label{ineq6:**}.
\end{eqnarray}
Therefore, for $j=l,\ldots, l+2^l-N$, combining \eqref{ineq5:*} and \eqref{ineq6:**}, we have that:
\begin{eqnarray*}
d(f^{j+N}(x),f^{j+N}(y))&\geq& d(f^{j}(z_{N+1}),f^{j+N+1}(z_{N+1}))-d(f^{j}(z_{N+1}),f^{j+N}(x))\\
&&\quad\quad-d(f^{j+N}(y),f^{j+N+1}(z_{N+1}))\\
&\geq& \eta - \frac{2}{m}>\frac{\eta}{2}>\eps.
\end{eqnarray*}
By Lemma \ref{lem:dist_n} we have that $d(f^k(z_{N+1}),f^{k+N+1}(z_{N+1}))>\eta$ for all $k>0$, so using the properties of the set $Q^{N+1}_m(z_{N+1},z_{N+1})$ we get the following:
$$
d(f^{\iota+N+1}(x),f^{\iota+N+1}(y))>\eta-\frac{2}{m}>\frac{\eta}{2}
$$
for $\iota=l,\dots,2^l+l-N-1$ provided that $m$ is sufficiently large.
The above inequality leads to $\Phi_{uv}(\eps)=0$ also in this last case.
We have just proved that $\Phi_{uv}(\eps)=0$ for every distinct $u,v\in D$.

It remains to show that for all $t>0$ we have $\Phi^*_{uv}(t)=1$.
Fix any $\xi>0$ and let $\delta>0$ be such that if $d(w,r)<\delta$ then $d(f^j(w),f^j(r))<\xi$ for $j=1,\dots,K$, where $K = N+M+1$.

Let us first consider the case when $x\neq y$. Fix any $m>2/\delta$ and observe that by the definition of $Q$ we have that $(x,y) \in Q^{|N-M|}_m(z_0,z_0)$, hence there exist $l>m$ such that for $i=0,\dots,2^l$ we have:
\begin{align}
d(f^{i+l+|N-M|}(x),z_0)&<\frac{1}{m} \label{ineq5} \\
d(f^{i+l}(y),z_0)&<\frac{1}{m} \label{ineq6},
\end{align}
In particular, for any $t=l,\dots,l+2^l$ we obtain that:
\begin{equation}
d(f^{t+|N-M|}(x),f^{t}(y))<\frac{2}{m}<\delta.\label{ineq7:*}
\end{equation}
Assume first that $N\geq M$. Then by \eqref{ineq7:*} and the choice of $\delta$ and $K$ we obtain that for $t=l,\dots,l+2^l$:
$$
d(f^{t+N}(x),f^{t+M}(y)) = d(f^{t}(u),f^{t}(v)) <\xi.
$$
It immediately implies that:
\begin{eqnarray*}
\Phi^*_{uv}(\xi)&\geq& \limsup_{l \rightarrow \infty}\frac{1}{l+2^l}\sharp\{0\leq j <l+2^l:d(f^{j}(u),f^{j}(v))<\xi \}\\
&\geq& \lim_{l \rightarrow \infty}\frac{2^l}{2^l+l}=1.
\end{eqnarray*}
In the case $N<M$ the calculations are almost identical, thus left to the reader.

In last the case $x=y$ calculations are very similar. First, in such a case we must have $N\neq M$, since $u\neq v$.
Next, since $z_0$ is a fixed point, there is $\tau<\frac{1}{m}$ such that if $d(w,z_0)<\delta$ then $d(f^{|N-M|}(w),z_0)<\frac{1}{m}$.
Fix $m'$ such that $1/m'<\tau$. Since $S$ is infinite, there is $z$ such $(y,z)=(x,z)\in Q^{|N-M|}_{m'}(z_0,z_0)$, and so there is $l>m'>m$
such that for $i=0,\ldots, 2^l$
\begin{equation}
d(f^{i+l}(x),z_0) = d(f^{i+l}(y),z_0)<\frac{1}{m'}<\tau<\frac{1}{m} \label{ineq8:*},
\end{equation}
hence, by the choice of $\tau$, we also have that
\begin{equation}
d(f^{i+l+|N-M|}(y),z_0) <\frac{1}{m} \label{ineq8:**}.
\end{equation}
Now, using \eqref{ineq8:*} and \eqref{ineq8:**} the same way as before we did with \eqref{ineq5} and \eqref{ineq6}
once again we obtain that $\Phi^*_{uv}(\xi)=1$. This completes all possible cases.
showing that indeed the claim holds, that is any two distinct points $u,v\in D$
satisfy that
$$
\Phi_{uv}(\eps)=0 \quad \text{ and }\quad \Phi^*_{uv}(\xi)=1 \quad \text{ for every }\xi>0.
$$

Now we are ready to prove the theorem.
Note that $Q$ is residual by Lemma~\ref{lem:Qmn} and that
relations $\Tran(f)$ and $\Rec(f)$ are residual since $f$ is transitive.
Additionally observe that the relation $R=\bigcap_{n,m}Q_m^n(z_1,z_0) \cap Q_m^n(z_0,z_0)$ is residual. Additionally, if $(x,y)\in R$ then the pair $(x,p)$ is DC1
since for every $m,n$ there is $l>m$ such that for $i=0,\ldots, 2^l$:
\begin{eqnarray*}
d(f^{i+l+1}(x),f^{i+l}(z_{1}))&<\frac{1}{m}\\
\end{eqnarray*}
But since $\dist(z_0, \overline{\orb^+(z_1)})=\dist(z_0, \set{z_1,f(z_1)})>2\eps$ and $m$ can be arbitrarily large,
we easily get that $\Phi_{f^n(x),p}(\eps)=0$ for $n=0,1,\ldots$. Similarly, it is not hard to show that $\Phi^*_{f^n(x),p}(\xi)=1$
for every $\xi>0$

We can apply Theorem~\ref{thm:RelMycielski} to residual relation
$$
Q\cap R\cap (\Tran(f)\times \Tran(f)) \cap (\Rec(f)\times \Rec(f))
$$
obtaining a dense Mycielski set $S\subset X$. But then it is not hard to construct a dense Mycielski set $\tilde{S}$
such that $p\in \tilde{S}\subset S\cup \set{p}$. Denote $D=\bigcup_{i=0}^\infty f^i(\tilde{S})$ and observe that by the definition of
$S$ we have that $(D\setminus \set{p})\times (D\setminus \set{p})\subset (Q\cap R)\cup \Delta$.
Then by properties of relation $Q$ investigated in the first part of the proof we obtain that for any $u,v\in D\setminus \set{p}$
$$
\Phi_{uv}(\eps)=0 \quad \text{ and }\quad \Phi^*_{uv}(\xi)=1 \quad \text{ for every }\xi>0.
$$
But if $u\neq v=p$ then by the properties of relation $R$ we obtain that
$$
\Phi_{up}(\eps)=0 \quad \text{ and }\quad \Phi^*_{up}(\xi)=1 \quad \text{ for every }\xi>0.
$$
Indeed $D$ is distributionally $\eps$-scrambled set. Just by the definition we have that $f(D)\subset D$ and $D\subset (\Tran(f)\cap \Rec(f))\cup \set{p}$
which ends the proof.
\end{proof}

\begin{thm}
\label{thm:myc-distr}
Let $Y$ be an infinite mixing sofic shift
with a fixed point $p\in Y$ and let $\pi\colon (X,f)\ra (Y,\sigma)$ be a factor map such that that $\pi^{-1}(p)$ is finite and consist of periodic points. Then for some $\eps>0$ there is a Cantor distributionally $\eps$-scrambled set $C$ for $f$.

If additionally $\pi^{-1}(p)$ consists of fixed points then there is an invariant Mycielski distributionally $\eps$-scrambled set $M$ for $f$.
\end{thm}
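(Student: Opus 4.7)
The plan is to reduce this statement to Corollary~\ref{cor:InvDC} by passing, if necessary, to the iterate under which every point of $\pi^{-1}(p)$ becomes a fixed point. Since $Y$ is a mixing sofic shift, $\sigma|_Y$ has the specification property (a classical fact), and specification is inherited by every iterate of a continuous surjection. Let $k\geq 1$ be the least common multiple of the periods of the finitely many points in $\pi^{-1}(p)$; in the second assertion one simply takes $k=1$. Then $\pi\colon(X,f^k)\ra(Y,\sigma^k)$ is again a factor map, $\sigma^k$ is a surjective continuous map of the infinite space $Y$ with specification and with $p$ as a fixed point, and $\pi^{-1}(p)$ consists entirely of fixed points of $f^k$.

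Next I would apply Theorem~\ref{thm:spec} to $(Y,\sigma^k)$, obtaining some $\eta>0$ and a Mycielski distributionally $\eta$-scrambled set $D\subset Y$ with $p\in D\subset(\Tran(\sigma^k)\cap\Rec(\sigma^k))\cup\set{p}$. Any $y\in D\setminus\set{p}$ then lies in $\Tran(\sigma^k)\cap\Rec(\sigma^k)$ and, because $D$ is distributionally scrambled, forms a DC1 pair with $p$ for $\sigma^k$. Corollary~\ref{cor:InvDC} applied to the factor map $\pi\colon(X,f^k)\ra(Y,\sigma^k)$ with this $y$ and $p$ then supplies some $\eps>0$ together with an $f^k$-invariant Mycielski distributionally $\eps$-scrambled set $M\subset X$.

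For the second assertion this already finishes the proof, since in that case $k=1$ and $M$ is itself $f$-invariant, Mycielski and distributionally $\eps$-scrambled for $f$. For the first assertion I would select any Cantor subset $C\subset M$ and observe that a distributional $\eps$-scrambling of $C$ under $f^k$ descends to a distributional $\eps'$-scrambling of $C$ under $f$, where $\eps'>0$ is chosen by uniform continuity so that $d(u,v)<\eps'$ forces $d(f^j(u),f^j(v))<\eps$ for $j=0,\dots,k$. A routine block-counting argument over blocks of length $k$ then gives $\Phi_{xy}(\eps')=0$ and $\Phi^*_{xy}(t)=1$ for every $t>0$ and every distinct $x,y\in C$ relative to $f$, since the analogous equalities already hold for $f^k$.

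The main obstacle is really just the bookkeeping of these two auxiliary passages, namely that specification descends to iterates of surjections, and that distributional $\eps$-scrambling of a set under $f^k$ descends to distributional $\eps'$-scrambling of the same set under $f$. Both follow from uniform continuity and the definitions, so all the hard work has already been done in Theorems~\ref{thm:1} and \ref{thm:spec} and in Corollary~\ref{cor:InvDC}.
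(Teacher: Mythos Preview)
Your proposal is correct and follows essentially the same route as the paper: pass to the iterate $f^k$, $\sigma^k$ (with $k$ a common period of the points in $\pi^{-1}(p)$), use specification of $\sigma^k$ on $Y$ to feed Theorem~\ref{thm:spec} and then Corollary~\ref{cor:InvDC}, and finally descend the distributional $\eps$-scrambling from $f^k$ to $f$. The only cosmetic difference is that the paper justifies specification of $\sigma^k$ by noting that $(Y,\sigma^k)$ is again (conjugate to) a mixing sofic shift and then proving directly that mixing sofic shifts have specification, whereas you quote specification of $\sigma$ as a classical fact and pass to the iterate; both are valid.
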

\begin{proof}
Next, note that $\pi$ is a factor map between $(X,f^n)$ and $(Y,\sigma^n)$ for any $m>0$ and $(Y,\sigma^m)$ remains mixing sofic shift as well as $p$ remains fixed for $\sigma^m$ too.
If $(Z,\sigma)$ is an infinite mixing sofic shift then there is a presentation $G$ of $Z$ such that every vertex in $G$ is synchronizing, that is, for a given vertex $J$ in $G$
there is word $v_J$ such that any path presenting $v_J$ have to finish in $J$ (e.g. see Lemma 3.315 and Proposition 3.3.16 in \cite{LindMarc}).
But using the above fact together with assumption that $(Z,\sigma)$ is mixing we see that there is $n$ such that any two vertices in $G$ can be connected by a path of length $n$.
In other words, for any two words $u,v$ allowed for $(Z,\sigma)$ there is a word $w$ of length $n$ such that $uwv$ is also allowed for $Z$ (can be presented by a path on $G$).
It is easy to verify that this condition is equivalent to specification property.

Therefore, we can apply Theorem~\ref{thm:spec} and next Corollary~\ref{cor:InvDC} to the sofic shift $(Y,\sigma^m)$, where $m$ is a common period of periodic points in $\pi^{-1}(p)$.
It is well known that every distributionally $\eps$-scrambled set for $f^n$ is a distributionally $\gamma$-scrambled for $f$, where $\gamma$ depends only on $\eps$ and $n$, which proves the first part of theorem.
Obviously, if $n=1$ then the second part of theorem follows as well.
\end{proof}

\section{Applications}\label{sec:appl}

In this section we provide a possible application of Theorem~\ref{thm:myc-distr}. Example developed in Theorem~\ref{thm:zast} is an extension of
examples of this type considered so far in the literature, e.g. in \cite{OpWil6,OpWil7}. The main difficulty here is
that $\pi^{-1}(p)$, where $p$ is a fixed point in a subshift, contains more than two fixed points for the Poincar\'{e} map.
A mathematically complete and fully rigorous proof of the following Theorem is quite technical and so we decided not to present it in full detail.
We only sketch the calculations that should be performed together with main arguments that follow from these calculations.
The details are left to the reader, who should be able to perform them, since in many aspects methodology is similar to the one presented by authors in \cite{OpWil7}.

\begin{thm}
 \label{thm:zast}
Let the inequalities
\begin{eqnarray}
\label{ineq:kappa}
0<&\kappa&\leq 0.037,\\
\label{ineq:N}
0\leq& N&\leq 0.001
\end{eqnarray}
be satisfied. Then there exists $\eps>0$ such that there is an invariant Mycielski distributionally $\eps$-scrambled set
for the Poincar\'{e} map $\varphi_{(0,T)}$ of the local process generated by the equation
\begin{equation}
\label{eq:glow}
\dot{z}=v(t,z)=\left(1+e^{i\kappa t}\abs{z}^2\right)\z^3 -N,
\end{equation}
where
\begin{equation}
\label{eq:T}
T=\frac{2\pi}{\kappa}.
\end{equation}
\end{thm}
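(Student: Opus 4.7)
The plan is to apply Theorem~\ref{thm:myc-distr} to the $T$-periodic local process $\varphi$ generated by \eqref{eq:glow}, by constructing an explicit semi-conjugacy from a compact invariant set of the Poincaré map $P_T=\varphi_{(0,T)}$ onto an infinite mixing sofic shift whose fixed point pulls back to a finite set of fixed points of $P_T$. The construction itself proceeds via the method of periodic isolating segments (Theorem~\ref{tws}) combined with the continuation method (Theorem~\ref{thm:cont}), using $\lambda\mapsto\lambda N$ as the homotopy parameter joining the unforced problem $N=0$ to the parameter value of interest.

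At $N=0$ the equation $\dot z=(1+e^{i\kappa t}\abs{z}^2)\z^3$ has the origin as a stationary solution, and the dominant term $\z^3$ carries an essentially threefold rotational structure. The first step I would take is to construct, in suitable rotating coordinates, a large $T$-periodic isolating segment $W\subset[0,T]\times\C$ whose proper exit set $W^{--}$ decomposes into $K$ disjoint periodic pieces $E[1],\ldots,E[K]$ (corresponding to escape sectors) and a smaller $T$-periodic segment $U\subset W$ enclosing the origin, with $(U_0,U_0^{--})=(W_0,W_0^{--})$, so that conditions (G1), (G2) of Subsection~\ref{sec:pis} hold uniformly for all $\lambda\in[0,1]$ provided the bounds \eqref{ineq:kappa}, \eqref{ineq:N} are met. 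The hypotheses \eqref{ineq:kappa}, \eqref{ineq:N} enter precisely here: they are exactly the range in which the homotopy does not spoil the transversality on $W^{--}$ and $U^{--}$. From Theorem~\ref{thm:cont} every admissible itinerary $c\in\{0,1,\dots,K\}^{\N}$ has a nonempty realisation $D_c$ giving a continuous surjection $\pi$ from a compact $P_T$-invariant set $X\subset W_0$ onto a subshift $Y\subset\{0,1,\dots,K\}^{\Z}$, intertwining $P_T$ with the shift $\sigma$. The constant sequence $p=0^\infty$ is a fixed point of $\sigma$, and its $\pi$-preimage consists exactly of the fixed points of $P_T$ sitting inside $U$.

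The symbol $0$ stands for ``one more period inside $U$''; admissibility of words is governed by which exit pieces $E[l]$ can be reached from which entrance pieces, and this defines $Y$ as a shift whose allowed transitions are recorded by a finite labeled graph. I would check that this graph is strongly connected with a $\gcd=1$ cycle structure, so that $Y$ is an infinite mixing sofic shift, and then the semi-conjugacy $\pi\colon(X,P_T)\to(Y,\sigma)$ satisfies the assumptions of Theorem~\ref{thm:myc-distr} provided $\pi^{-1}(p)$ is finite and consists of fixed points of $P_T$. At $N=0$, $\pi^{-1}(p)$ contains only the origin (a single fixed point of $P_T$), while for $0<N\leq 0.001$ the equation $(1+\abs{z}^2)\z^3=N$ in the autonomous case forces the stationary solution to bifurcate into three branches at arguments $0,\pm 2\pi/3$ of modulus $\sim N^{1/3}$; persistence of exactly three fixed points of $P_T$ inside $U$ under the small non-autonomous perturbation is then enforced by the continuation theorem together with a local fixed point index computation inside three small disjoint sub-segments $U^{(j)}\subset U$.

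The last step is a routine application of Theorem~\ref{thm:myc-distr}: with $Y$ an infinite mixing sofic shift, $p\in Y$ a fixed point and $\pi^{-1}(p)$ a finite set of fixed points of $P_T$, the theorem produces $\eps>0$ and an invariant Mycielski distributionally $\eps$-scrambled set for $P_T$. The main obstacle is clearly Step~1 together with the local analysis at Step~3: giving an explicit polygonal or ``rotating box'' description of $W$ and $U$ and verifying, under the explicit numerical bounds $0<\kappa\leq 0.037$ and $0\leq N\leq 0.001$, that the boundary behaviour on $W^{--}$ and $U^{--}$ is uniformly transverse along the whole homotopy, and that the number of fixed points of $P_T$ inside $U$ is exactly three. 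These are quantitative estimates on the vector field \eqref{eq:glow} on explicit regions of $\C$, essentially the same style of calculation as in \cite{OpWil7}, and the chosen bounds on $\kappa$ and $N$ are what guarantees they go through.
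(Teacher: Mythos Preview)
Your overall strategy matches the paper's: build two nested $T$-periodic isolating segments $U\subset W$, use the continuation method with homotopy parameter $\lambda=N$ to obtain a semi-conjugacy from a compact $P_T$-invariant set onto a mixing sofic shift, and then invoke Theorem~\ref{thm:myc-distr}. Two points deserve correction.

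First, a minor slip: in the continuation framework it is $U^{--}$, not $W^{--}$, that is decomposed into the periodic pieces $E[1],\dots,E[K]$ (see conditions (H3)--(H4)). In the paper $K=4$, corresponding to the four exit faces of an octagonal segment, and the resulting coding lands in a strictly sofic subshift $\Pi\subset\Sigma_5$.

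Second, and more substantively, your treatment of $\pi^{-1}(0^\infty)$ has a real gap. You write that this fibre ``consists exactly of the fixed points of $P_T$ sitting inside $U$'' and propose to control it by index computations in three small sub-segments. But $\pi^{-1}(0^\infty)$ is by definition the set of initial conditions whose full trajectory remains in $U^\infty$ for all time; a priori this set is merely compact and invariant, not finite, and certainly not automatically composed of fixed points. Index arguments give you \emph{at least} one fixed point in each sub-segment, i.e.\ a lower bound of three, but say nothing about an upper bound on $\#\pi^{-1}(0^\infty)$. The paper spends its entire Step~II on precisely this point: it constructs a nested family of auxiliary isolating segments $V(\xi)$ and $E_{\langle k\rangle}(\eta)$ around each of the three candidate periodic solutions $\psi_k$, and then uses a combination of local linearisation (dominant term $3M^2\bar w$ near each $\psi_k$), a uniqueness lemma from \cite{OpWil7}, and a qualitative phase-portrait analysis of $\bar z^3-N$ to show that any solution remaining in $U^\infty$ must eventually be trapped in some $E_{\langle k\rangle}(0.004M)^\infty$ and hence coincide with $\psi_k$. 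Without this uniqueness analysis, Theorem~\ref{thm:myc-distr} cannot be applied, since its hypothesis requires $\pi^{-1}(p)$ to be finite.
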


\begin{proof}[Sketch of the proof]
The technical details of the proof are similar to the ones from \cite{OpWil7}.

The proof consists of the two steps.
Firstly we contruct $I$ an invariant subset of $\mathbb{C}$, sofic shift $\Pi$ and a semiconjugacy $g|_I:I\longrightarrow \Pi$ between $\left(I,\varphi_{(0,T)}\right)$ and $(\Pi,\sigma)$. In the second step we investigate properties of the semiconjugacy and then apply Theorem \ref{thm:myc-distr}.

\textbf{Step I.}
Our aim is to construct two appropriate periodic isolating seg\-m\-ents and then apply Theorem~\ref{thm:cont} and \cite[Theorem~7]{WojZgli4}.

We fix $\kappa$ and $N$ satisfying \eqref{ineq:kappa}, \eqref{ineq:N} and set $r=0.568$, $R=1.5$, $\d=10.6$. Let $\varphi^\lambda$ be the local process generated by \eqref{eq:glow} where $\lambda = N$. If there is no confusion, we simply write $\varphi$ instead of $\varphi^\lambda$. Note that  $\{\varphi^\lambda\}_{\lambda\in [0,0.001]}$ is a continuous family of local processes, i.e. close processes have close values for the same starting point $(\sigma,x)$
and sufficiently small time shift $t$.

\picmm{Fig1}{86mm}{Isolating segments (a)~$W$, (b)~$U$, (c)~$V(\xi)$ and sets  (d)~$Z$ and  (e)~$\widehat{Z}$. Sets $W^{--}$, $U^{--}$, $V(\xi)^{--}$ are marked in grey.}{pic:WUVZ}

We define sets
\begin{align*}
\widehat{B}&=\left\{z\in \mathbb{C}:\abs{\Arg z}\leq \frac{\pi}{8}, \re(z) \leq 1\right\}\cup \{0\},\\
\widehat{B}^\flat&=\left\{z\in\widehat{B}: \re(z) =1\right\}
\end{align*}
and a rotation $h:\mathbb{C}\ni z\mapsto e^{i\frac{\pi}{4}}\in \mathbb{C}$, where as usual $\re(z)$ and $\im(z)$ denote, respectively, real and imaginary part of $z\in \mathbb{C}$. We write
\begin{equation*}
B=\bigcup_{k=0}^{7}h^k\left(\widehat{B}\right), \quad B^\flat= \bigcup_{k=0}^{2} h^{2k} \left(\widehat{B}^\flat\right), \quad B^\sharp= h\left(B^\flat\right)
\end{equation*}
and for each $s\in \R^+$ we define
\begin{equation*}
B(s)=\{sz:z\in B\}, \quad B(s)^\flat=\{sz:z\in B^\flat\}, \quad B(s)^\sharp=\{sz:z\in B^\sharp\}.
\end{equation*}
Note that $B(s)$ is a regular octagon centered at the origin with diameter $2s\tan (\frac{\pi}{8})$. Since $\d=10.6<\frac{T}{2}$, if we set $\omega=\frac{R-r}{\d}$ then the function  $s:[0,T]\longrightarrow [r,R]$ given by the following formula is well defined and continuous:
\begin{equation}
\label{eq:s}
s(t)=\begin{cases}
R-\omega t, & \text{for } t\in [0,\d],\\
r, & \text{for } t\in [\d,T-\d],\\
R-\omega(T-t), &\text{for } t\in [T-\d,T].
\end{cases}
\end{equation}

By a straightforward calculation (inner product of the vector field $(1,v)$ and an outward normal vector at every point of the boundary of the set) it can be easily verified that the sets
\begin{align}
\label{eq:W}
W(\kappa)=&\left\{(t,z)\in [0,T]\times \mathbb{C}:e^{-i\frac{t\kappa}{4}}z\in B(R)\right\},\\
\label{eq:U}
U(\kappa)=&\left\{(t,z)\in [0,T]\times \mathbb{C}:z\in B(s(t))\right\}
\end{align}
are $T$-periodic isolating segments for every $\varphi^\lambda$ and, moreover,
\begin{align}
\label{eq:widW}
W(\kappa)^{--}=&\left\{(t,z)\in [0,T]\times \mathbb{C}:e^{-i\frac{t\kappa}{4}}z\in B(R)^\flat\right\},\\
\label{eq:widwidW}
W(\kappa)^{++}=&\left\{(t,z)\in [0,T]\times \mathbb{C}:e^{-i\frac{t\kappa}{4}}z\in B(R)^\sharp\right\},\\
\label{eq:widU}
U(\kappa)^{--}=&\left\{(t,z)\in [0,T]\times \mathbb{C}:z\in B(s(t))^\flat\right\},\\
\label{eq:widwidU}
U(\kappa)^{++}=&\left\{(t,z)\in [0,T]\times \mathbb{C}:z\in B(s(t))^\sharp\right\}
\end{align}
hold. Note that diameter of $U(\kappa)$ decreases fast enough to satisfy
$U(\kappa)\subset W(\kappa)$ despite the fact that $W(\kappa)$ is a rotation of starting $W(0)$, e.g. see
Figure~\ref{pic:2}.

When number $\kappa$ is fixed, and it causes no confusion, we simply write $W$, $U$ instead of $W(\kappa)$, $U(\kappa)$, respectively, see Figure~\ref{pic:WUVZ}(a), \ref{pic:WUVZ}(b).

\picmm{Fig2}{74mm}{Segment $U(\kappa)$ narrows when octagon $W(\kappa)$ is rotating.}{pic:2}

It can also be verified by appropriate calculations that sets $W(\kappa)$ and $U(\kappa)$ satisfy conditions (G1) and (G2) uniformly with respect to $\lambda$. Moreover, $U(\kappa)$ can be decomposed into four connected components  $U(\kappa)^{--}=E[1]\cup E[2]\cup E[3]\cup E[4]$, where
\begin{align*}
E[1]=&\left\{(t,z)\in U(\kappa)^{--}: \abs{\Arg z}\leq \frac{\pi}{8}\right\},\\
E[2]=&\left\{(t,z)\in U(\kappa)^{--}: (t,h^{-2}(z))\in E[1]\right\},\\
E[3]=&\left\{(t,z)\in U(\kappa)^{--}: (t,h^{-4}(z))\in E[1]\right\},\\
E[4]=&\left\{(t,z)\in U(\kappa)^{--}: (t,h^{-6}(z))\in E[1]\right\}.
\end{align*}

In order to define desired semi-conjugacy, first we define a subshift $\Pi\subset \Sigma_5$ such that $c\in \Pi$ if and only if the following rules are satisfied:
\begin{itemize}
\item[(M1)] if $c_i=k$ for some $k\in \{1,2,3,4\}$, then $c_{i+1}\in \{0,k, (k \mod 4)+1\}$,
\item[(M2)] if $c_p=0$ for all $p\leq i$, then $c_{i+1}\in \{0,1,2,3,4\}$,
\item[(M3)] if $c_j=0$ for all $j\in \{p+1,\ldots,i\}$ and $c_p=k\neq 0$, then $c_{i+1}\in \{0,k, (k \mod 4)+1\}$.
\end{itemize}
Note that conditions (M1)--(M3) in fact define a sofic shift $\Pi$ whose presentation is given on Figure~\ref{pic:sofic}.

\picmm{Fig3}{73mm}{Presentation of the sofic shift $\Pi$.}{pic:sofic}

A set of forbidden words
for $\Pi$ is presented below
\begin{equation*}
\set{10^k3, 10^k4, 20^k1, 20^k4, 30^k2, 30^k1, 40^k2, 40^k3: k\in \N\cup\{0\}}
\end{equation*}
Observe that shift $\Pi$ is strictly sofic, that is $\Pi$ is not a shift of finite type. Indeed, if $\Pi$ is of finite type then there is $n>0$ such that if $uv$ and $vw$ are allowed words then $uvw$ is also allowed, provided that $|v|\geq n$, i.e. $v$ has at least $n$ symbols (see \cite{JonMar} for more comments).
In our case, in particular, $10^n$ and $0^n3$ are allowed for every $n>0$ but $10^n 3$ is always forbidden.

Since the presentation of the shift $\Pi$ is an irreducible graph and it has self loop, shift $\Pi$ is mixing (see \cite[Prop. 4.5.10.]{LindMarc}), so we are in position to apply Theorem~\ref{thm:myc-distr}, provided that we can semi-conjugate Poincar\'{e} map on an invariant subset with $(\Pi,\sigma)$.

Let $c\in \Pi$ be an $n$-periodic point of $\sigma$.
Then, by Theorem \ref{thm:cont}, the equality
\begin{equation*}
{\mathrm{ind}}\left(\varphi^0_{(0,nT)}\mid_{(W_0\setminus W_0^{--})_c^0}\right) ={\mathrm{ind}}\left(\varphi^\lambda_{(0,nT)}\mid_{(W_0\setminus W_0^{--})_c^\lambda}\right)
\end{equation*}
is satisfied, while following the proof of Theorem 7 in \cite{WojZgli4}, we can prove that ${\mathrm{ind}}\left(\varphi^0_{(0,nT)}\mid_{(W_0\setminus W_0^{--})_c^0}\right)\neq 0$ and finally that
\begin{equation}
\label{eq:index}
{\mathrm{ind}}\left(\varphi^\lambda_{(0,nT)}\mid_{(W_0\setminus W_0^{--})_c^\lambda}\right)\neq 0
\end{equation}
holds.
By \eqref{eq:index} and Theorem~\ref{tws} there exists a point $z\in (W_0\setminus W_0^{--})_c^\lambda$ such that $\varphi^\lambda_{(0,nT)}(z)=z$.
Now we fix $\lambda$ and simply write $\varphi$ instead of $\varphi^\lambda$.

Let us denote
\begin{equation}
\widetilde{I}=\set{z\in W_0:\varphi_{(0,t+kT)}(z)\in W_t \text{ for } t\in [0,T], k\in \Z}.
\end{equation}
and define a (continuous) map $g:\widetilde{I}\longrightarrow \Sigma_5$ by
\begin{equation*}
g(z)_l=\begin{cases}
0, & \text{if } \varphi_{(0,t+lT)}(z)\in U_t \text{ for all } t\in (0,T),\\
k, & \text{if } \varphi_{(0,lT)}(z) \text{ leaves } U \text{ in time less than } T \text{ through } E[k].
\end{cases}
\end{equation*}
Let us notice that, by the same argument as used in \cite{WojZgli4}, for a given $l\in \Z$ the trajectory $\varphi_{(0,lT)}(z)$ can leave $U$ in time less than $T$ at most once, so $g$ is well defined.

Directly from the definition of $g$ we obtain that $\s\circ g=g\circ\varphi_{(0,T)}$. Moreover, by \eqref{eq:index} and Theorem~\ref{tws}, if $c\in \Pi$ is $n$-periodic, then $g^{-1}(c)$ contains an $n$-periodic point of $\varphi_{(0,T)}$. Thus $\Pi\subset g\left(\widetilde{I}\right)$. Let $I=g^{-1}\left(\Pi\right)$. Then $g\mid_I:I\longrightarrow \Pi$ is a semiconjugacy between $\varphi_{(0,T)}\mid_I$ and $\s\mid_\Pi$.


\textbf{Step II.}
To apply Theorem \ref{thm:myc-distr} and finish the proof we only need to show that
\begin{equation}
\label{ineq:g-1}
\#g^{-1}(\{0^\infty\})\leq 3
\end{equation}
and all the points in $g^{-1}(\{0^\infty\})$ are fixed points of $\varphi_{(0,T)}$.

Condition \eqref{ineq:g-1} equivalently means that there are at most three solutions $z:\R\to \mathbb{C}$ of \eqref{eq:glow} which stay in the set $U^{\infty}$, which in other words mean that $z(t+nT)\in U_t$ for every $t\in [0,T]$ and $n\in \Z$.

\emph{Case I.} Let $N=0$. Then $\psi\equiv 0$ is a solution of \eqref{eq:glow}. We show that it is the only solution in $U^\infty$. We start with defining set
\begin{equation}
\label{eq:V}
V(\xi)=[0,T]\times B(\xi)
\end{equation}
where $\xi>0$.

It can be observed that for every $\xi\in (0,r]$ (see Figure~\ref{pic:WUVZ}(c)) we have that
\begin{equation}
\label{eq:V--}
V(\xi)^{--}=[0,T]\times B(\xi)^\flat, \quad V(\xi)^{++}=[0,T]\times B(\xi)^\sharp
\end{equation}
hold, i.e. $V(\xi)$ is an isolating segment. Thus, by \cite[Lemma 3.4]{OpWil7}, $\psi$ is the only solution which stays in $V(r)^{\infty}$. By direct calculations, every solution which enters $(U\setminus V(r))^\infty$ has to leave $U^\infty$. Thus condition \eqref{ineq:g-1} is satisfied.

\emph{Case II.} Now, let $N>0$ be satisfied. We write
\begin{align*}
M=&\sqrt[3]{N}\\
P_{\langle k \rangle}=& e^{\frac{2}{3}\pi i k} \text{ for } k\in\set{0,1,2},\\
j: \mathbb{C}^2\ni & (P,z) \mapsto j(P,z) = \overline{P}z+MP\in \mathbb{C}.
\end{align*}
For $\eta >0$ we define sets
\begin{align*}
D(\eta)=& \left\{ z\in\mathbb{C}: \abs{\re\left[z \right]}\leq \eta, \abs{\im\left[z \right]}\leq \eta \right\},\\
D(\eta)^\sharp=& \left\{ z\in D(\eta): \abs{\re\left[z \right]}= \eta \right\},\\
D(\eta)^\flat=& \left\{ z\in D(\eta):  \abs{\im\left[z \right]}= \eta \right\},\\
E_{\langle k \rangle}(\eta) = & [0,T]\times j(\set{P_{\langle k \rangle}}\times D(\eta)) \text{ for } k\in\set{0,1,2}.
\end{align*}

Similarly as before it can be shown by appropriate analysis of the vector field that for every $\eta\in[0.004 M, 0.383 M]$ and every $k\in\set{0,1,2}$ each of the sets $E_{\langle k \rangle}(\eta)$ is an isolating segment such that
\begin{align*}
E_{\langle k \rangle}(\eta)^{--} = & [0,T]\times j(\set{P_{\langle k \rangle}}\times D(\eta)^\sharp),\\
E_{\langle k \rangle}(\eta)^{++} = & [0,T]\times j(\set{P_{\langle k \rangle}}\times D(\eta)\flat)
\end{align*}
and moreover
\begin{align*}
{\mathrm{Lef}}\left(\mu_{E_{\langle k \rangle}\left(\eta\right)}\right)= \chi\left(E_{\langle k \rangle}\left(\eta\right)_0\right)-\chi\left(E_{\langle k \rangle}\left(\eta\right)_0^{--}\right)=1-2=-1\neq 0,
\end{align*}
where $\chi$ denote the Euler characteristic. By Theorem~\ref{tws}, there exists $T$-periodic solution $\psi_k$  of the equation~\eqref{eq:glow} inside the set $E_{\langle k \rangle}\left(0.004 M\right)^\infty$. Note that $\psi_k(0)\in g^{-1}(\{0^\infty\})$ and we can also find the following upper bound
\begin{equation}
\label{ineq:oszpsi1}
\abs{\psi_k}(t)\leq 0.004 \sqrt{2} M\quad\quad \text{ for every } t.
\end{equation}

To finish the proof it is enough to show that $\psi_0$, $\psi_1$ and $\psi_2$ are the only solutions of \eqref{eq:glow}  contained completely in $U^\infty$. It can be done in five steps:
\begin{enumerate}[(i)]
\item\label{step.i} $\psi_k$ is the only solution in $E_{\langle k \rangle}\left(0.004 M\right)^\infty$ for every $k\in\set{0,1,2}$,
\item\label{step.ii} $\psi_k$ is the only solution in $E_{\langle k \rangle}\left(0.383 M\right)^\infty$ for every $k\in\set{0,1,2}$,
\item\label{step.iii} $\psi_0$, $\psi_1$ and $\psi_2$ are the only solutions in $V\left(1.3M\right)^\infty$,
\item\label{step.iv} $\psi_0$, $\psi_1$ and $\psi_2$ are the only solutions in $V(r)^\infty$,
\item\label{step.v} every solution which enters $(U\setminus V(r))^\infty$ has to leave $U^\infty$,
\end{enumerate}
where $V(\xi)$ is defined by \eqref{eq:V}. Now we are going sketch proofs of conditions \eqref{step.i}--\eqref{step.v}, which will end the proof.

\eqref{step.i}: By the symmetry
\begin{equation}
\label{eq:symmetry}
v(t,z)=v(t,P_{\langle k \rangle} z) \text{ for all } (t,z)\in \R\times \mathbb{C} \text{ and } k\in\set{0,1,2},
\end{equation}
it is enough to prove that $\psi_0$ is the only solution in $E_{\langle 0 \rangle} \left(0.004 M\right)^\infty$. The idea and calculations in the case of $\psi_1$ and  $\psi_2$ are similar.

We make the following change of variables
\begin{equation}
\label{eq:zmiana1}
w=P_{\langle 0 \rangle}(z-MP_{\langle 0 \rangle})-\psi_0
\end{equation}
and show that in a neighborhood of origin term $3M^2\overline{w}$ is dominating in the equation. Since the dynamics of the equation is determined by this term in the ball centered at origin with radius equal to $0.01M\sqrt{2}$, the only solution of the equation which never leaves the neighbourhood is $w\equiv 0$. Since $\left(E_{\langle k \rangle}\left(0.004 M\right)\right)_0$ (in $z$ coordinates) is contained in this neighbourhood (in $w$ coordinates), $\psi_0$ is the only solution in $E_{\langle 0 \rangle}\left(0.004 M\right)^\infty$.

\eqref{step.ii}:  It is a direct consequence of the \cite[Lemma 3.4]{OpWil7}.

\picmm{Fig5}{136mm}{The qualitative behavior of the vector field $v$ inside $\left(V\left(1.3M\right)\right)_0$. Arrows shows general directions of trajectories. If an arrow starts at a line, the line cannot be passed by any trajectory in the opposite direction. Sets $\left(E_{\langle k \rangle}\left(0.004 M\right)\right)_0$ for $k=0,1,2$ are marked in grey.}{pic:glowny}

\eqref{step.iii}: It it enough to show that every solution which is contained for some time $t_0$ in $V\left(1.3M\right)^\infty$ but outside every $E_{\langle k \rangle}\left(0.004 M\right)^\infty$, where $k=0,1,2$, has to leave $V\left(1.3M\right)^\infty$ for some $t_1\neq t_0$. It can be done by a careful analysis of the vector field inside the set $V\left(1.3M\right)$. Since the dominating term of the vector field $v$ is $\z^3-N$, and it does not depend on the time $t$, it is enough to investigate it in the subset of the complex plane $\left(V\left(1.3M\right)\right)_0$ taking only into account perturbations of $\z^3-N$ which come from the other terms of $v$. The qualitative behavior of $v$ is sketched in the Figure~\ref{pic:glowny}.

\eqref{step.iv}: Note that for every $\xi\in [1.3M,r]$ the set $V(\xi)$ is an isolating segment such that \eqref{eq:V--} hold. Now, it is enough to apply \cite[Lemma 3.4]{OpWil7}.

\eqref{step.v}: It can be proved in an analogous way to the proof of \cite[Lemma 3.2]{OpWil7} by analyzing $Z=\left((U\setminus V(r))^\infty\right)_{[-\Delta, \Delta]}$ and its part $\widehat{Z}$ depicted in Figure \ref{pic:WUVZ}(d) and (e), respectively.

\end{proof}

\section*{Acknowledgements}
During 16th Czech-Slovak Workshop on Discrete Dynamical Systems (CSWDDS 2012) held between 11th and 15th of June, 2012 at
Pustevny in Beskydy mountains in  Czech Republic Jana Dole\v{z}elov\'a announced that she can prove a version of Theorem~\ref{thm:spec} with an additional assumption that there are
periodic points of infinitely many different periods. This result was obtained by her completely independently of our research.
While at the time present paper was almost finished, we acknowledge her priority in answering Question~1 in \cite{OprochaDS}.

This research was supported by the Polish Ministry of Science and Higher Education.
Research of P. Oprocha leading to results contained in this paper was supported by the Marie Curie European Reintegration Grant of the European Commission under grant agreement no. PERG08-GA-2010-272297.

The financial support
of these institutions is hereby gratefully acknowledged.

\bibliographystyle{amsplain}
\bibliography{fow_ref}

\def\ocirc#1{\ifmmode\setbox0=\hbox{$#1$}\dimen0=\ht0 \advance\dimen0
  by1pt\rlap{\hbox to\wd0{\hss\raise\dimen0
  \hbox{\hskip.2em$\scriptscriptstyle\circ$}\hss}}#1\else {\accent"17 #1}\fi}
\providecommand{\bysame}{\leavevmode\hbox to3em{\hrulefill}\thinspace}
\providecommand{\MR}{\relax\ifhmode\unskip\space\fi MR }
\providecommand{\MRhref}[2]{%
  \href{http://www.ams.org/mathscinet-getitem?mr=#1}{#2}
}
\providecommand{\href}[2]{#2}
\begin{thebibliography}{10}

\bibitem{Akin}
Ethan Akin, \emph{Lectures on {C}antor and {M}ycielski sets for dynamical
  systems}, Chapel {H}ill {E}rgodic {T}heory {W}orkshops, Contemp. Math., vol.
  356, Amer. Math. Soc., Providence, RI, 2004, pp.~21--79. \MR{2087588
  (2005e:37018)}

\bibitem{BlHuSn}
Fran{\c{c}}ois Blanchard, Wen Huang, and L'ubom{\'{\i}}r Snoha,
  \emph{Topological size of scrambled sets}, Colloq. Math. \textbf{110} (2008),
  no.~2, 293--361. \MR{2353910 (2008j:37013)}

\bibitem{Bow3}
Rufus Bowen, \emph{Topological entropy and axiom a}, Global {A}nalysis ({P}roc.
  {S}ympos. {P}ure {M}ath., {V}ol. {XIV}, {B}erkeley, {C}alif., 1968), Amer.
  Math. Soc., Providence, R.I., 1970, pp.~23--41. \MR{0262459 (41 \#7066)}

\bibitem{DenGrillSig}
Manfred Denker, Christian Grillenberger, and Karl Sigmund, \emph{Ergodic theory
  on compact spaces}, Lecture Notes in Mathematics, Vol. 527, Springer-Verlag,
  Berlin, 1976. \MR{0457675 (56 \#15879)}

\bibitem{Dold}
Albrecht Dold, \emph{Lectures on algebraic topology}, Classics in Mathematics,
  Springer-Verlag, Berlin, 1995, Reprint of the 1972 edition. \MR{1335915
  (96c:55001)}

\bibitem{JonMar}
Nata{\v{s}}a Jonoska and Brian Marcus, \emph{Minimal presentations for
  irreducible sofic shifts}, IEEE Trans. Inform. Theory \textbf{40} (1994),
  no.~6, 1818--1825. \MR{1322390 (95m:58051)}

\bibitem{Kurka}
Petr K{\ocirc{u}}rka, \emph{Topological and symbolic dynamics}, Cours
  Sp\'ecialis\'es [Specialized Courses], vol.~11, Soci\'et\'e Math\'ematique de
  France, Paris, 2003. \MR{2041676 (2004k:37017)}

\bibitem{LiYorke}
Tien~Yien Li and James~A. Yorke, \emph{Period three implies chaos}, Amer. Math.
  Monthly \textbf{82} (1975), no.~10, 985--992. \MR{0385028 (52 \#5898)}

\bibitem{LindMarc}
Douglas Lind and Brian Marcus, \emph{An introduction to symbolic dynamics and
  coding}, Cambridge University Press, Cambridge, 1995. \MR{1369092
  (97a:58050)}

\bibitem{OprTAMS}
Piotr Oprocha, \emph{Distributional chaos revisited}, Trans. Amer. Math. Soc.
  \textbf{361} (2009), no.~9, 4901--4925. \MR{2506431 (2010d:37019)}

\bibitem{OprochaDS}
\bysame, \emph{Invariant scrambled sets and distributional chaos}, Dyn. Syst.
  \textbf{24} (2009), no.~1, 31--43. \MR{2548813 (2010i:37087)}

\bibitem{OpWil5}
Piotr Oprocha and Pawe{\l} Wilczy{\'n}ski, \emph{Distributional chaos via
  isolating segments}, Discrete Contin. Dyn. Syst. Ser. B \textbf{8} (2007),
  no.~2, 347--356 (electronic). \MR{2317812 (2008f:37033)}

\bibitem{OpWil6}
\bysame, \emph{A study of chaos for processes under small perturbations}, Publ.
  Math. Debrecen \textbf{76} (2010), no.~1-2, 101--116. \MR{2598175
  (2011g:37092)}

\bibitem{OpWil7}
\bysame, \emph{A study of chaos for processes under small perturbations. {II}.
  {R}igorous proof of chaos}, Opuscula Math. \textbf{30} (2010), no.~1, 5--36.
  \MR{2591848 (2011g:37093)}

\bibitem{SchwSm}
B.~Schweizer and J.~Sm{\'{\i}}tal, \emph{Measures of chaos and a spectral
  decomposition of dynamical systems on the interval}, Trans. Amer. Math. Soc.
  \textbf{344} (1994), no.~2, 737--754. \MR{1227094 (94k:58091)}

\bibitem{Srzed10}
Roman Srzednicki, \emph{A geometric method for the periodic problem in ordinary
  differential equations}, S\'eminaire d'Analyse Moderne [Seminar on Modern
  Analysis], vol.~22, Universit\'e de Sherbrooke D\'epartement de
  Math\'ematiques et d'Informatique, Sherbrooke, QC, 1992. \MR{1313504
  (96a:34089)}

\bibitem{Srzed70}
\bysame, \emph{Wa\.zewski method and {C}onley index}, Handbook of differential
  equations, Elsevier/North-Holland, Amsterdam, 2004, pp.~591--684. \MR{2166495
  (2006j:37014)}

\bibitem{SrzedWojZgli}
Roman Srzednicki, Klaudiusz W{\'o}jcik, and Piotr Zgliczy{\'n}ski, \emph{Fixed
  point results based on the {W}a\.zewski method}, Handbook of topological
  fixed point theory, Springer, Dordrecht, 2005, pp.~905--943. \MR{2171125
  (2006g:37015)}

\bibitem{WojZgli1}
Klaudiusz W{\'o}jcik and Piotr Zgliczy{\'n}ski, \emph{Isolating segments, fixed
  point index, and symbolic dynamics}, J. Differential Equations \textbf{161}
  (2000), no.~2, 245--288. \MR{1744149 (2001e:37031)}

\bibitem{WojZgli4}
\bysame, \emph{Isolating segments, fixed point index, and symbolic dynamics.
  {III}. {A}pplications}, J. Differential Equations \textbf{183} (2002), no.~1,
  262--278. \MR{1917245 (2003g:37023)}

\end{thebibliography}
\end{document}